	\titleformat{\section}[block]{\Large\bfseries\filcenter}{\thesection}{1em}{}
\theoremstyle{plain}
\renewcommand*\thesection{\arabic{section}}
\numberwithin{equation}{section} 
\theoremstyle{plain}
\newtheorem{thm}{Theorem}
\newtheorem{theorem}{Theorem}
\newtheorem{lemma}[thm]{Lemma}
\newtheorem{prop}[thm]{Proposition}
\numberwithin{thm}{section} 
\theoremstyle{definition}
\newtheorem{remark}[thm]{Remark}
\newcommand{\thistheoremname}{}
\newtheorem{genericthm}[thm]{\thistheoremname}
\newcommand{\thistheoremnames}{}
\newtheorem*{genericthms}{\thistheoremnames}
\newenvironment{para*}[1]
  {\renewcommand{\thistheoremnames}{#1}%
   \begin{genericthms}}
  {\end{genericthms}}
\let\expandafter\oldproof\csname\string\proof\endcsname
\let\oldendproof\endproof
\renewenvironment{proof}[1][\proofname]{%
  \oldproof[\upshape \bfseries #1]%
}{\oldendproof}
\def\@makechapterhead#1{%
  \vspace*{50\p@}%
  {\parindent \z@ \raggedright \normalfont
    \interlinepenalty\@M
    \Huge\bfseries  \thechapter.\quad #1\par\nobreak
    \vskip 40\p@
  }}
\def \R {\mathbb{R}}
\def \N{\mathbb{N}}
\def \D{\textup{D}}
\def \e{\varepsilon}
\def \p{\partial}
\def \tp{\textup}
\def \d{\,\mathrm{d}}
\def \Div{\tp{div}\,}
\title{Regularity for quasilinear vectorial elliptic systems through an iterative scheme with numerical applications}
\author{Lukas Koch}
\affil[1]{\small University of Oxford, Andrew Wiles Building Woodstock Rd, Oxford OX2 6GG, United Kingdom 
\protect \\
  {\tt{kochl@maths.ox.ac.uk}}
  \vspace{1em} \ }
\newcommand\blfootnote[1]{%
  \begingroup
  \renewcommand\thefootnote{}\footnote{#1}%
  \addtocounter{footnote}{-1}%
  \endgroup
}
\begin{document}

\maketitle
\blfootnote{
\emph{2010 Mathematics Subject Classification:} 35J57, 65N30\\ 
\emph{Keywords.} Regularity, iterative Galerkin schemes, p-growth\\
\emph{Acknowledgements.} L.K. was supported by the Engineering and Physical Sciences Research Council [EP/L015811/1].
}

\begin{abstract}
We consider an iterative procedure to solve quasilinear elliptic systems with $p$-growth. The scheme was first considered by \textsc{Koshelev} in the quadratic case $p=2$. We present numerical applications as well as applications to higher regularity properties.
\end{abstract}

\section{Introduction}
Let $1<p<\infty$ and consider an open and bounded domain $\Omega\subset \R^n$.
This paper is concerned with problems of the form: Seek $u\in W^{1,p}_g(\Omega)=g+W^{1,p}_0(\Omega,\R^N)$ such that
\begin{align}\label{eq:KoshProblem}
\Div a(x,\D u) =& \,\Div f \quad \text{ in } \Omega
\end{align}
where ${a\colon \Omega\times \R^{N \times n}\to \R^{N \times n}}$ is a matrix-valued elliptic structure field satisfying controlled $p$-growth conditions, $f\in L^{p'}(\Omega,\R^{N\times n})$ and $g\in W^{1,p}(\Omega,\R^N)$. Here $p'$ denotes the H\"older conjugate of $p$ and \eqref{eq:KoshProblem} is to be understood in the sense of distributions.
To be precise, we make the following assumptions on $a$:
\begin{enumerate}
\item[{\crtcrossreflabel{(A1)}[ass:A1]}] For every $z\in \R^{N\times n}$, $a(\cdot,z)$ is measurable in $\Omega$ and, for  almost every $x\in\Omega$, $a(x,\cdot)$ is continuously differentiable in $\R^{N\times n}$.
\item[{\crtcrossreflabel{(A2)}[ass:A2]}] There are $\Lambda_a\geq\lambda_a>0$ and $\mu\geq 0$ such that, for almost every $x\in\Omega$ and for all $z,\xi\in \R^{N \times n}$, $$\lambda_a (\mu^2+|z|^2)^\frac{p-2}{2}|\xi|^2\leq \p_z a(x,z)\xi\cdot \xi \leq \Lambda_a (\mu^2+|z|^2)^\frac{p-2}{2}|\xi|^2.$$
\item[{\crtcrossreflabel{(A3)}[ass:A3]}] There is $c>0$, such that, for all $x\in\Omega$, $z\in \R^{N \times n}$, $|a(x,z)|\leq  c\left(1 +|z|^{p-1}\right)$.
\end{enumerate}
For simplicity of presentation, in addition to $\mathrm{\ref{ass:A1}-\ref{ass:A3}}$, we make the further assumption that 
\begin{align}\tag{A4}\label{ass:A4}
\text{For almost every } x\in\Omega \text{ and every } z\in \R^{N\times n},\, \p_z a(x,z) \text{ is symmetric.}
\end{align}
\begin{remark}
We have chosen to focus on systems of the form $\Div a(x,\D u)=\Div f$, however with minor changes to $\mathrm{\ref{ass:A1}-\ref{ass:A3}}$ all the theory we develop applies equally to systems of the form $\Div a(x,u,\D u)=\Div f$ where $a\colon \Omega\times \R^N\times \R^{N\times n}\to\R^{N\times n}$. 
Further \eqref{ass:A4} can be dropped if the results of this paper are modified as indicated in Remark \ref{rem:nonsymmetric}.
\end{remark}

We will only concern ourselves with the vectorial case $N\geq 2$ as the theory in the scalar case $N=1$ is different and much more can be said.

The study of elliptic systems of the form \eqref{eq:KoshProblem} is well established with a very long list of important results. For an introduction we refer to \cite{Giaquinta1983} and \cite{Giusti2003} as well as the classical \cite{Ladyzhenskaya1968}.
It is well known that in general, without stronger structural assumptions than ellipticity and controlled $p$-growth, solutions to \eqref{eq:KoshProblem} will not be $C^{1,\alpha}$- or even $C^{0,\alpha}$- regular. In fact, problems already arise in the simplest case when $p=2$. An initial counterexample for systems with $a\equiv a(x,z)$ (with $n\geq 3$) was developed in \cite{DeGiorgi1968}, where a quadratic system was given for which solutions are not H\"older-continuous and, in fact, fail to be bounded. While the $x$-dependence of the system in \cite{DeGiorgi1968} is discontinuous, almost at the same time an example was found of a higher-order elliptic operator with analytic coefficients but discontinuous solutions \cite{Mazya1968}. We remark at this point that, if $n=2$, solutions to \eqref{eq:KoshProblem} are H\"older continuous \cite{Morrey1938}. When $a\equiv a(x,y,z)$ does indeed depend on $y$, it is possible to give examples where $a$ is analytic, but solutions to \eqref{eq:KoshProblem} are nowhere continuous, see \cite{Soucek1984,John1989}, building on examples with analytic fields $a$ but discontinuous solutions in \cite{Miranda1968}. In this set-up, an example of a system in optimal dimensions $n>2$, $N=2$ with discontinuous solutions is given in \cite{Frehse1973}. However, the lack of regularity occurs already without $x$-dependence of the coefficients. Even when $a\equiv a(z)$ is quadratic and analytic, $C^{1,\alpha}$-regularity of solutions need not hold, if $n>2$, as \cite{Necas1980} shows.

Due to this lack of regularity, the general regularity theory of systems of type \eqref{eq:KoshProblem} proceeds through notions of partial regularity, that is, regularity outside of a context-dependent small relatively closed set. Since our focus here lies on results holding everywhere in the domain, we refer the reader to \cite{Giaquinta1983,Giusti2003,Mingione2006} for results and references in this direction.

Regularity results holding in the full domain are known only in a few special cases. The radial Uhlenbeck structure $a\equiv a(\left|z \right|)z$, originally developed in \cite{uhlenbeck1977}, is well known to guarantee full $C^{1,\alpha}$-regularity. In \cite{Tolksdorf1983}, the result is generalised to fields of the form $g(z,z)z$, where $g(\cdot,\cdot)$ is a symmetric, positive definite bilinear form. In particular, this theory covers the model case of the $p$-Laplace operator $a(x,y,z)=|z|^{p-2}z$.
There now exists a vast literature concerning the regularity theory of \eqref{eq:KoshProblem} if $a\equiv a(|z|)z$ and we refer the reader to \cite{Giaquinta1986,Marcellini1996,Marcellini2006,Lieberman2006,Diening2009,Diening2019} for further results and references.

A second direction of everywhere regularity results, in the case where $a\equiv a(z)$, concerns the case where the modulus of continuity of $\p_z a(z)$ is not too large compared to the ellipticity constant of $a(z)$. $C^{1,\alpha}_{\tp{loc}}$-regularity of solutions to \eqref{eq:KoshProblem} is shown in this set-up in \cite{Danecek2004,Danecek2007,Danecek2013}. 

A further direction, closely related to the results of this paper, are results of Cordes-Nirenberg type. For linear elliptic PDE, the classical Cordes-Nirenberg results \cite{Nirenberg1954,Cordes1956} state that solutions of
\begin{align}\label{eq:CordesNirenberg}
 a_{ij}(x)\partial_{ij}u(x)=f(x)
\end{align}
have interior $C^{1,\alpha}$ regularity, if $f\in L^\infty$ and $|a_{ij}-\delta_{ij}|<\varepsilon$ for sufficiently small $\varepsilon$. That is, $C^{1,\alpha}$-regularity is obtained when the coefficients $\{a_{ij}\}$ are sufficiently close to the identity matrix in an appropriate sense. Note that the identity matrix is nothing but the coefficients corresponding to the Laplacian. In \cite{Caffarelli1989} the result is extended to fully non-linear elliptic equations with solutions in the sense of viscosity solutions. A result of a similar spirit is \cite{Kristensen2013}, where local $BMO$-regularity for extremals of (a suitable relaxed version of) $\tp{dist}_K^2(\D u)$ is shown. Here $\tp{dist}_K(z)$ denotes the distance of $z\in \R^{N\times n}$ to the compact set $K\subset\R^{N\times n}$.

We also comment on the case where $a(x,z)=a(x)z+b(x,z)z$ and $\|b(x,z)\|\leq c(n)\lambda_a$. Here $a(x)$ is a positive definite matrix, with smallest eigenvalue uniformly bounded below in $\Omega$ by $\lambda_a>0$ and $c(n)$ is a constant depending only on $n$. If $c(n)$ is sufficiently small, then bounded solutions of \eqref{eq:KoshProblem} are $\alpha$-H\"older-continuous \cite{Wiegner1982}.

In this paper we obtain regularity results for fields $a$ that are close to a suitable reference field $b$, see the start of Section \ref{sec:KoshelevBasic} for a precise definition of our notion of reference field. Suppose $a,b$ satisfy $\mathrm{\ref{ass:A1}}$ and $\mathrm{\ref{ass:A2}}$, and let $A(x,z)=\p_z a(x,z), B(x,z)=\p_z b(x,z)$. Denote by $\lambda_{B^{-1}A}(x,z), \Lambda_{B^{-1}A}(x,z)$ the smallest and largest eigenvalue of $B^{-1}(x,z)A(x,z)$, respectively. 
We note that then $\lambda_{B^{-1}A}$ has a lower bound, and $\Lambda_{B^{-1}A}(x,z)$ has an upper bound, that is independent of $x,z$. Denote these bounds by $\lambda_{a,b}$ and $\Lambda_{a,b}$, respectively. We set
\begin{align}\label{def:K}
K_{a,b}=\frac{\Lambda_{a,b}-\lambda_{a,b}}{\lambda_{a,b}+\Lambda_{a,b}}
\end{align}
We will operate in a regime where $K_{a,b}$ is sufficiently small and remark that in all cases we are able to make the smallness assumption on $K_{a,b}$ explicit. In this regime we are able to transfer existence and regularity results for solutions of the system $\Div b(x,\D u)=\Div f$ to solutions of the system $\Div a(x,\D u)=\Div f$.

The observation of \cite{Koshelev1991} is that, if $a$ is a matrix-valued elliptic structure field satisfying controlled quadratic growth, then considering the scheme
\begin{align*}
\begin{cases}
-\Delta u_{n+1}=-\Delta u_n - \gamma \Div a(x,\D u_n) +\gamma \Div f &\quad \text{ in } \Omega\\
u_{n+1}= 0 &\quad \text{ on } \p\Omega,
\end{cases}
\end{align*}
with any choice of $u_0\in W^{1,2}_g(\Omega)$,
gives a sequence $\{u_n\}$ that converges in $W^{1,2}(\Omega)$ to a solution of \eqref{eq:KoshProblem}. The problem is to be understood in the sense of distributions. \cite{Koshelev1991} further shows convergence of the scheme in appropriate Morrey spaces, obtaining $C^{0,\alpha}$- and $C^{1,\alpha}$-regularity of solutions to \eqref{eq:KoshProblem} under (sharp in the case of $C^{0,\alpha}$) smallness assumptions on $K$. We will return to this point shortly.

We extend this result to matrix-valued elliptic structure fields with $p$-growth.
\begin{theorem}\label{thm:KoshelevBasicIntro}
Let $1<p<\infty$, $f\in L^{p'}(\Omega,\R^{N \times n})$ and $g\in W^{1,p}(\Omega,\R^N)$.
Suppose $a, b$ satisfy $\mathrm{\ref{ass:A1}-\ref{ass:A3}}$ and \eqref{ass:A4}. Assume that the problem
$$
\Div b(x,\D u)= \Div F \text{ in } \Omega
$$
has a unique solution in $W^{1,p}_g(\Omega)$ for any choice of $F\in L^{p'}(\Omega)$. 
If
\begin{align*}
\begin{cases}
\dfrac{\Lambda_b}{\lambda_b}K_{a,b} \left(\dfrac{3+\sqrt{5}}{2}\right)^\frac{p-2}{2p} 6^{p-2}(p-1)< 1 &\quad\text{ for } p\geq 2 \\[20pt]
\dfrac{\Lambda_b}{\lambda_b}K_{a,b} \left(\dfrac{3-\sqrt{5}}{2}\right)^\frac{(p-2)}{4}\dfrac{2^{2-p}}{p-1}< 1 &\quad\text{ for } p\leq 2,
\end{cases}
\end{align*}
then there is $\gamma>0$ such that, if 
${u_0\in W^{1,p}_g(\Omega)}$, and ${u_{n+1}\in W^{1,p}_g(\Omega)}$ is inductively defined to be the weak solution of
\begin{align}\label{eq:KoshIteration}
\Div b(x,\D u_{n+1}) = \Div( b(x,\D u_n)-\gamma a(x,\D u_n)+ \gamma f) \text{ in } \Omega
\end{align}
in the class $W^{1,p}_g(\Omega)$, then $u_n\to u$ in $W^{1,p}(\Omega)$. Here $u$ is the unique solution in $W^{1,p}_g(\Omega)$ of \eqref{eq:KoshProblem}.
\end{theorem}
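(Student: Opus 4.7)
The plan is to cast \eqref{eq:KoshIteration} as a fixed-point iteration $u_{n+1}=Tu_n$, where $Tv\in W^{1,p}_g(\Omega)$ is the unique weak solution of
\[
\Div b(x,\D Tv)=\Div\bigl(b(x,\D v)-\gamma a(x,\D v)+\gamma f\bigr);
\]
this map is well defined by the hypothesis on $b$ together with \ref{ass:A3}, and I would equip $W^{1,p}_g(\Omega)$ with the metric $d(v_1,v_2)=\|V_\mu(\D v_1)-V_\mu(\D v_2)\|_{L^2(\Omega)}$, where $V_\mu(z)=(\mu^2+|z|^2)^{(p-2)/4}z$ is the usual $p$-growth auxiliary map. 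Standard inequalities for $V_\mu$ guarantee that $d$-Cauchy sequences are $W^{1,p}$-Cauchy, so a strict contraction of $T$ in $d$ would produce, via Banach's fixed point theorem, a unique fixed point $u\in W^{1,p}_g(\Omega)$ with $u_n\to u$ in $W^{1,p}(\Omega)$. Passing to the limit in \eqref{eq:KoshIteration} using \ref{ass:A3} identifies $u$ as a weak solution of \eqref{eq:KoshProblem}, and uniqueness follows from the strict monotonicity of $a$ encoded in \ref{ass:A2}.

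To prove the contraction I would subtract the defining equations for two consecutive iterates and test with $u_{n+1}-u_n\in W^{1,p}_0(\Omega)$, obtaining
\[
\int_\Omega(b(x,\D u_{n+1})-b(x,\D u_n))\cdot(\D u_{n+1}-\D u_n)\d x=\int_\Omega\bigl[(b-\gamma a)(x,\D u_n)-(b-\gamma a)(x,\D u_{n-1})\bigr]\cdot(\D u_{n+1}-\D u_n)\d x.
\]
The left-hand side is bounded below by a universal constant times $\lambda_b\|V_\mu(\D u_{n+1})-V_\mu(\D u_n)\|_{L^2}^2$ by the $p$-growth monotonicity in \ref{ass:A2}. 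On the right-hand side, the fundamental theorem of calculus along $z_t=t\D u_n+(1-t)\D u_{n-1}$ rewrites the integrand as $\int_0^1(B-\gamma A)(x,z_t)(\D u_n-\D u_{n-1})\d t$, and then Koshelev's key observation applies: with the optimal choice $\gamma=2/(\lambda_{a,b}+\Lambda_{a,b})$ and using the symmetry in \eqref{ass:A4}, the matrix $I-\gamma B^{-1}A$ is self-adjoint in the $B(x,z_t)$-inner product with spectrum contained in $[-K_{a,b},K_{a,b}]$, so that
\[
|(B-\gamma A)(x,z_t)\,\xi\cdot\eta|\leq K_{a,b}\,(B(x,z_t)\xi\cdot\xi)^{1/2}(B(x,z_t)\eta\cdot\eta)^{1/2},
\]
which is what injects the factor $K_{a,b}$ into the final estimate.

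Taking $\xi=\D u_n-\D u_{n-1}$ and $\eta=\D u_{n+1}-\D u_n$ and combining the above with the upper bound in \ref{ass:A2} on $B$ leaves me estimating a constant multiple of $\Lambda_b K_{a,b}\int_\Omega\int_0^1(\mu^2+|z_t|^2)^{(p-2)/2}|\xi||\eta|\d t\d x$. A Cauchy--Schwarz in $(t,x)$ together with the classical identity
\[
\int_0^1(\mu^2+|tz_1+(1-t)z_2|^2)^{(p-2)/2}\d t\,|z_1-z_2|^2\asymp|V_\mu(z_1)-V_\mu(z_2)|^2
\]
converts the factor attached to $\xi$ into $\|V_\mu(\D u_n)-V_\mu(\D u_{n-1})\|_{L^2}$, while a second, more delicate, pointwise comparison dominates $(\mu^2+|z_t|^2)^{(p-2)/2}|\eta|^2$ in $L^1((0,1)\times\Omega)$ by a multiple of $\|V_\mu(\D u_{n+1})-V_\mu(\D u_n)\|_{L^2}^2$; this last step splits naturally into the cases $p\geq 2$ and $p\leq 2$. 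Dividing by $\|V_\mu(\D u_{n+1})-V_\mu(\D u_n)\|_{L^2}$ yields
\[
\|V_\mu(\D u_{n+1})-V_\mu(\D u_n)\|_{L^2}\leq \alpha\,\|V_\mu(\D u_n)-V_\mu(\D u_{n-1})\|_{L^2}
\]
with an explicit $\alpha<1$ precisely under the stated smallness of $K_{a,b}$.

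The main obstacle will be the sharp $p$-growth bookkeeping in this last chain of comparisons. The precise constants $((3\pm\sqrt 5)/2)^{(p-2)/2p}$, $6^{p-2}(p-1)$ and $2^{2-p}/(p-1)$ appearing in the hypothesis are the optimised losses arising from the sharp two-sided $V$-functional inequalities and from the comparison between the weight $(\mu^2+|z_t|^2)^{(p-2)/2}$ and the analogous weight evaluated at $\D u_{n+1},\D u_n$; keeping these constants sharp separately in the super- and sub-quadratic regimes, especially in the degenerate case $p<2$, is the genuinely delicate numerical part of the argument. Once the contraction factor is in hand, the remaining steps (Banach fixed point, passage to the limit, uniqueness) are essentially routine.
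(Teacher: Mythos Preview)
Your overall strategy matches the paper: subtract consecutive defining equations, test with $u_{n+1}-u_n$, bound the left side below via the monotonicity in \ref{ass:A2}, and control the right side using the spectral estimate $\|B-\gamma A\|\leq K_{a,b}\|B\|$ coming from Lemma~\ref{lem:KoshelevLinearAlgebra} (your $B$-inner-product formulation is equivalent). Your Banach-fixed-point framing in the $V_\mu$-metric is a reasonable repackaging and, were the contraction established, would actually be slightly cleaner than the paper's route for $p<2$: $d$-Cauchyness of $(u_n)$ forces $\|V_\mu(\D u_n)\|_{L^2}$, hence $\|\D u_n\|_{L^p}$, to stay bounded, so the paper's separate boundedness argument (Lemma~\ref{cor:boundedness}) would become redundant.

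The gap is in the ``second, more delicate, pointwise comparison''. After Cauchy--Schwarz in $(t,x)$, the factor attached to $\eta=\D u_{n+1}-\D u_n$ is $\int_\Omega\bigl[\int_0^1(\mu^2+|z_t|^2)^{(p-2)/2}\d t\bigr]|\eta|^2\d x$, and since $z_t$ lies on the segment joining $\D u_{n-1}$ and $\D u_n$, the weight is governed by $|\D u_{n-1}|$ and $|\D u_n|$. There is no inequality, pointwise or in $L^1$, bounding this by a fixed multiple of $\|V_\mu(\D u_{n+1})-V_\mu(\D u_n)\|_{L^2}^2$: for $p>2$ take $|\D u_{n-1}|$ large with $\D u_n,\D u_{n+1}$ bounded, and for $p<2$ take $|\D u_{n+1}|$ large with $\D u_n,\D u_{n-1}$ small; in both cases the ratio is unbounded. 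The paper does not compare but \emph{absorbs}: instead of Cauchy--Schwarz it applies the $V$-Young inequality of Lemma~\ref{lem:VYoung} with shifted parameter $\tilde\mu=(\mu^2+|\D u_n|^2)^{1/2}$, producing an $\varepsilon$-term carrying exactly the iteration quantity for step $n$ and a $C_\varepsilon$-term whose weight $(\mu^2+|\D u_n|^2+|\eta|^2)^{(p-2)/2}$ is converted, via the elementary comparison of Lemma~\ref{lem:triangleEstimate} (this is where $(3\pm\sqrt 5)/2$ enters), into the weight on the left-hand side and absorbed. Optimising in $\varepsilon$ then yields the contraction with the stated constants. Replacing your Cauchy--Schwarz step by this $V$-Young/absorption mechanism repairs the argument; the rest of your outline goes through essentially as written.
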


\begin{remark}
We note that if $p=2$ and $b(x,z) = z$, we recover precisely the situation in \cite{Koshelev1991}.
\end{remark}

The iterative scheme \eqref{eq:KoshIteration} naturally lends itself as a numerical scheme to solve \eqref{eq:KoshProblem} using finite element approximations. Take $g=0$ and suppose $\Omega$ is a regular polytope. Consider a family of shape-regular triangulations $\{T_h\}_{h\in(0,1]}$ of $\Omega$ of mesh-size $h$. Denote by $X_h$ the space of continuous piecewise linear (subordinate to $T_h$) functions. Take $u_0\in X_h$. Define $u_{n+1}^h\in X_h$ inductively as a solution of the problem:
\begin{align}\label{eq:numericalScheme}
\begin{cases}\int_\Omega b(x,\D u_{n+1}^h)\cdot \D \phi\d x = \int_\Omega (b(x,\D u_n^h)-\gamma a(x,\D u_n^h)+\gamma f)\cdot \D \phi\d x \quad &\forall \phi\in X_h\\
 u_{n+1}^h = 0 &\text{ on } \p \Omega
 \end{cases}
\end{align}
Note that due to $\mathrm{\ref{ass:A1}-\ref{ass:A3}}$ and the theory of monotone operators \eqref{eq:numericalScheme} is well-defined. We prove in Section \ref{sec:numerical} that under the assumptions of Theorem \ref{thm:KoshelevBasicIntro} this scheme converges in $W^{1,p}(\Omega)$ to a solution of the problem 
\begin{align*}
\begin{cases}\int_\Omega a(x,\D u_h)\cdot \D \phi = \int_\Omega f\cdot \D \phi\d x \quad &\forall \phi\in X_h\\
u_h = 0 &\text{ on } \p\Omega.
\end{cases}
\end{align*}
Under the additional assumption that,
\textit{if $F\in L^{p'}(\Omega)$, then there is $\alpha>0$ such that the equation $\Div b(x,\D u)= \Div F$ has a solution $u$ in $W^{1+\alpha,p}(\Omega)$, and, moreover, we have the following estimate for some $c>0$:}
\begin{align}\label{eq:higherRegIntro}
\|u\|_{W^{1+\alpha,p}(\Omega)}\leq c\left(1+\|u\|_{W^{1,p}(\Omega)}+\|F\|_{L^{p'}(\Omega)}^{1/(p-1)}\right),
\end{align}
we show that $u_h\to u$ in $W^{1,p}(\Omega)$ as $h\to 0$ where $u\in W^{1,p}_g(\Omega)$ is the solution of \eqref{eq:KoshProblem}. We note that \eqref{eq:higherRegIntro} is satisfied by many fields of interest, e.g. the $p$-Laplacian $b(x,z)=|z|^{p-2}z$, \cite{Simon1981}.

If $p=2$ and $b(x,z)=z$, the proposed numerical scheme falls into the class of iterative linearised Galerkin schemes studied in \cite{Heid2020}. We refer to this reference for an overview and further references regarding such schemes. An advantage of the schemes studied in this paper, and of unified iterative schemes of the form studied in \cite{Heid2020} in general, is that it is possible to show global convergence results at linear rate for a wide class of problems. This should be compared with nonlinear Newton schemes for which convergence can usually only be expected locally, albeit at quadratic rate. An example of a scheme, fitting into the framework of unified iterative schemes, that has been used to study some problems of $p$-Laplace type where $a(x,z)=a(x,|z|)z$ and $a(x,|z|)$ has $p-2$-growth, is the so-called Kačanov iteration, originally introduced in \cite{Kacanov1959}, and defined by solving
\begin{align*}
\Div a(x,|\D u_n|)\D u_{n+1} = \Div f.
\end{align*}
Versions of this scheme have been studied in \cite{Zeidler1988,Han1997,Garau2011,Diening2020a,Heid2021}, but in general have been restricted to the case $p\in[1,2]$. Note that the scheme studied in this paper cannot be used to replace these schemes as it requires the ability to solve \eqref{eq:KoshProblem} numerically for a reference field. However, given a problem to which a known numerical scheme, such as the Ka\v{c}anov iteration can be applied, it enables to solve perturbations of the problem, to which the known scheme cannot be applied directly. For example, in the case of the Ka\v{c}anov scheme, perturbations need not satisfy any structural assumption of the form $a(x,z)=a(x,|z|)z$.

In \cite{Koshelev1991} the iterative process is used to derive sharp conditions on the dispersion $\Lambda_a/\lambda_a$ of elliptic systems with quadratic growth that guarantee $C^{0,\alpha}$-regularity of solutions. An alternative proof was suggested in \cite{Leonardi1999}. Further, under additional assumptions, conditions for $C^{1,\alpha}$-regularity of solutions are presented. In this spirit, we obtain Cordes-Nirenberg type results with regards to Calder\'{o}n-Zygmund estimates for perturbations of fields of $p$-Laplace type.
We recall a result of \cite{Kinnunen2001}:
\begin{theorem}[Theorem 1.6 in \cite{Kinnunen2001}]\label{thm:Kinnunen}
Let $1<p<\infty$ and $\Omega\subset\R^n$ be a bounded domain with $C^{1,\alpha}$-boundary for some $\alpha\in(0,1]$. Suppose that the coefficients of $B$ belong to $VMO(\Omega)$ and that $F\in L^\frac{q}{p-1}(\Omega)$ for some $q>p$.  Then there exists a unique weak solution $u\in W^{1,p}_0(\Omega)$ of $\Div b(x,\D u)=\Div F$. Moreover, there is a constant $C_0>0$ so that $$\int_\Omega |\D u|^q \d x\leq C_0\int_\Omega |F|^\frac{q}{p-1}\d x.$$ Here $C_0$ depends only on $n,p,q,\lambda,\Lambda$, the VMO data of $B$ and $\p\Omega$. In particular, this implies that $u\in W^{1,q}_0(\Omega)$.
\end{theorem}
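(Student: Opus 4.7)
My plan is to follow a Caffarelli--Peral style perturbation argument, pivoting on local comparison with a frozen-coefficient problem. Existence and uniqueness of a weak solution $u\in W^{1,p}_0(\Omega)$ of $\Div b(x,\D u)=\Div F$ is immediate from \ref{ass:A1}--\ref{ass:A3} via monotone operator theory, since $L^{q/(p-1)}(\Omega)\hookrightarrow L^{p'}(\Omega)$ and the map $v\mapsto\Div b(x,\D v)$ is strictly monotone and coercive on $W^{1,p}_0(\Omega)$. The basic energy estimate gives $\|\D u\|_{L^p}^p\lesssim\|F\|_{L^{p'}}^{p'}$, which is the endpoint $q=p$ of the claim.

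For the higher integrability, I would localise to balls $B_{2r}(x_0)\Subset\Omega$ on which the $\tp{VMO}$ oscillation of $B$ is below a small threshold $\eta$ and compare $u$ with the solution $v\in u+W^{1,p}_0(B_r)$ of the frozen problem $\Div \bar b_r(\D v)=0$, where $\bar b_r(z)=\fint_{B_r(x_0)} b(\cdot,z)\,\d x$. Testing the difference equation with $u-v$ and invoking \ref{ass:A2} together with the $\tp{VMO}$-smallness of $B$ on $B_r$ should yield a comparison bound of the form
\begin{equation*}
\fint_{B_r}|\D u-\D v|^p\,\d x\le C\eta \fint_{B_{2r}}|\D u|^p\,\d x+C\fint_{B_{2r}}|F|^{p'}\,\d x.
\end{equation*}
Coupled with a decay estimate for $\D v$ on the frozen problem -- either a Lipschitz-type bound when the structure permits, or more generally a Gehring-type reverse H\"older inequality -- this produces a good-$\lambda$ inequality for the Hardy--Littlewood maximal function of $|\D u|^p$:
\begin{equation*}
|\{M(|\D u|^p)>K\lambda\}|\le \varepsilon\,|\{M(|\D u|^p)>\lambda\}|+|\{M(|F|^{p'})>\delta\lambda\}|,
\end{equation*}
valid for every $\lambda>0$ with $\varepsilon=\varepsilon(\eta)\to 0$. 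Integrating against $\lambda^{q/p-1}\,\d\lambda$ and using the strong $(q/p,q/p)$ bound for $M$ then produces the claimed $L^q$-estimate for $\D u$ after summing over a finite covering of $\Omega$.

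The boundary contribution is handled by $C^{1,\alpha}$-flattening of $\p\Omega$: near a boundary point one straightens $\p\Omega$ via a $C^{1,\alpha}$ diffeomorphism, which preserves the $\tp{VMO}$ regularity of the transformed coefficient field and turns $u$ into a function vanishing on the flat face of a half-ball, where the same comparison machinery goes through with $v$ now solving the frozen problem with zero trace on that face. The main technical obstacle is the decay input for the frozen problem: without Uhlenbeck structure on $b$, one cannot expect genuine $L^\infty$ gradient bounds for $\D v$, so quantitative Gehring self-improvement has to be iterated at finer scales while carefully tracking the dependence of the $\tp{VMO}$ threshold $\eta$ on the targeted exponent $q$ and on $\lambda_b,\Lambda_b,n,p$.
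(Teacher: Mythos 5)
There is nothing in the paper to compare your argument against: Theorem \ref{thm:Kinnunen} is quoted verbatim as Theorem 1.6 of the cited reference and is used purely as a black box (it supplies the constant $C_0$ that enters the smallness condition of Theorem \ref{thm:ZygmundEstimates}); the paper offers no proof of it. So your proposal is an attempt to reprove an external result. As such, the skeleton you describe --- monotone-operator existence, localisation to balls where the oscillation of $B$ is small, comparison with a frozen-coefficient problem, a good-$\lambda$ inequality for $M(|\D u|^p)$, boundary flattening by a $C^{1,\alpha}$ diffeomorphism --- is indeed the standard Caffarelli--Peral-type strategy by which results of this kind are proved, and the existence/uniqueness and covering parts are fine.

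The genuine gap is in your decay input for the frozen problem, which is the engine of the whole iteration. A Gehring-type reverse H\"older inequality self-improves integrability only from $p$ to some $p+\varepsilon$ with $\varepsilon$ depending on the ellipticity ratio; it cannot produce the estimate for \emph{every} $q>p$, which is what the theorem asserts. The good-$\lambda$ argument needs an interior (and boundary) $L^\infty$ bound for the gradient of the comparison function $v$, i.e.\ $C^{1,\alpha}$-regularity of the frozen problem. Your remark that this is unavailable ``without Uhlenbeck structure'' is misplaced for the class of fields actually covered here: in Section \ref{sec:higherReg} the field is $b(x,z)=(B(x)z\cdot z)^{\frac{p-2}{2}}B(x)z$ with $B$ symmetric and elliptic, so the frozen field $z\mapsto(B_0z\cdot z)^{\frac{p-2}{2}}B_0z$ with constant $B_0$ becomes the $p$-Laplacian after the linear change of variables $z\mapsto B_0^{1/2}z$, and the Uhlenbeck/Lieberman Lipschitz estimates apply. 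Without inserting that input (or an equivalent one), your scheme as written stalls at a small improvement of integrability and does not reach arbitrary $q$. A minor further point: the VMO smallness threshold $\eta$ is not an extra hypothesis; it is obtained by taking the radius $r$ small, and one must track that the resulting $C_0$ depends only on $n,p,q,\lambda,\Lambda$, the VMO data of $B$ and $\p\Omega$, as the statement requires.
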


We obtain the following extension:
\begin{theorem}
Let $1<p<q<\infty$. Suppose $\Omega$ is a $C^{1,\alpha}$ domain for some $0<\alpha\leq 1$. Take $f\in L^{q/(p-1)}(\Omega)$ and let $g=0$. Assume $b$ and $C_0$ are as in Theorem \ref{thm:Kinnunen}.
  Assume that $a$ satisfies assumptions $\mathrm{\ref{ass:A1}-\ref{ass:A3}}$ and \eqref{ass:A4} with $\mu=0$ and that the assumptions of Theorem \ref{thm:KoshelevBasicIntro} are satisfied. If
$$\dfrac{C_0^{(p-1)/q}\Lambda_b K_{a,b}}{(p-1)}<1,$$ then the solution $u$ of \eqref{eq:KoshProblem} satisfies
\begin{align}\label{eq:IntroHigherInteg}
\|\D u\|_{L^q(\Omega)}\lesssim \left(1+\|f\|_{L^{q/(p-1)}}^\frac{1}{p-1}\right).
\end{align}
\end{theorem}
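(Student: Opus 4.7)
The strategy is to iterate the scheme \eqref{eq:KoshIteration} starting from $u_0 = 0$ (which lies in $W^{1,p}_0(\Omega)$ since $g=0$) and, at each step, apply the Calder\'{o}n--Zygmund estimate of Theorem~\ref{thm:Kinnunen} to control $\|\D u_{n+1}\|_{L^q(\Omega)}$ by the $L^{q/(p-1)}$-norm of the right-hand side $F_n := b(x,\D u_n) - \gamma a(x,\D u_n) + \gamma f$. Theorem~\ref{thm:KoshelevBasicIntro} already yields $u_n \to u$ in $W^{1,p}(\Omega)$, so along a subsequence $\D u_n \to \D u$ almost everywhere; once a uniform bound on $\|\D u_n\|_{L^q(\Omega)}$ is established inductively, Fatou's lemma produces \eqref{eq:IntroHigherInteg}.

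The crux of the argument is the pointwise estimate
\begin{align*}
|b(x,z) - \gamma a(x,z)| \leq \frac{\Lambda_b K_{a,b}}{p-1}|z|^{p-1}
\end{align*}
for the optimal choice $\gamma = 2/(\lambda_{a,b}+\Lambda_{a,b})$. Since $\mu=0$ gives $a(x,0) = b(x,0) = 0$, the fundamental theorem of calculus yields
\begin{align*}
b(x,z) - \gamma a(x,z) = \int_0^1 \bigl(B(x,tz) - \gamma A(x,tz)\bigr) z \d t.
\end{align*}
By \eqref{ass:A4}, $B(x,tz)$ is symmetric positive definite, and the spectrum of $B^{-1}A$ is contained in $[\lambda_{a,b},\Lambda_{a,b}]$ by definition of these constants; the chosen $\gamma$ therefore forces $\|I - \gamma B^{-1}A\|_{\tp{op}} \leq K_{a,b}$, and hence $\|B - \gamma A\|_{\tp{op}} \leq \Lambda_b K_{a,b} \, t^{p-2}|z|^{p-2}$ via \ref{ass:A2} with $\mu=0$. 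Evaluating $\int_0^1 t^{p-2}\d t = 1/(p-1)$ completes the estimate.

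Combining this pointwise bound with Theorem~\ref{thm:Kinnunen} and taking the $(p-1)/q$-th power of $\int_\Omega |\D u_{n+1}|^q \d x \leq C_0 \int_\Omega |F_n|^{q/(p-1)}\d x$ produces the scalar recursion
\begin{align*}
\|\D u_{n+1}\|_{L^q(\Omega)}^{p-1} \leq \frac{C_0^{(p-1)/q}\Lambda_b K_{a,b}}{p-1}\|\D u_n\|_{L^q(\Omega)}^{p-1} + C_0^{(p-1)/q}\gamma\,\|f\|_{L^{q/(p-1)}(\Omega)}.
\end{align*}
Under the hypothesis the contraction factor $M := C_0^{(p-1)/q}\Lambda_b K_{a,b}/(p-1)$ is strictly less than one, so iterating from $u_0 = 0$ yields $\sup_n \|\D u_n\|_{L^q(\Omega)}^{p-1} \leq C_0^{(p-1)/q}\gamma(1-M)^{-1}\|f\|_{L^{q/(p-1)}(\Omega)}$, and Fatou concludes. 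The main obstacle I anticipate is the pointwise estimate above: one must carefully exploit \eqref{ass:A4} to convert the spectral bound on $I - \gamma B^{-1}A$ into a genuine operator-norm bound uniformly in $x$ and $z$, and the factor $(p-1)^{-1}$ in the smallness hypothesis is precisely the one forced by the radial integration of $t^{p-2}$.
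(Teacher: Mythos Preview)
Your approach is essentially identical to the paper's: run the iteration from $u_0=0$, apply Theorem~\ref{thm:Kinnunen} at each step, use the fundamental theorem of calculus together with the operator-norm bound $\|B-\gamma A\|\leq K_{a,b}\|B\|$ and the integral $\int_0^1 t^{p-2}\d t = 1/(p-1)$ to control $|b(x,z)-\gamma a(x,z)|$, and then iterate the resulting scalar recursion (the paper passes to the limit via a weakly convergent subsequence rather than Fatou, but this is cosmetic).

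One small correction: the hypothesis $\mu=0$ does \emph{not} force $a(x,0)=0$; \ref{ass:A2} with $\mu=0$ only says $\partial_z a(x,0)=0$ when $p>2$, and nothing about the value of $a$ itself (e.g.\ $a(x,z)=|z|^{p-2}z+c_0$ satisfies all the hypotheses). The paper handles this by writing $b(x,\D u_n)-\gamma a(x,\D u_n) = \bigl(b(x,\D u_n)-b(x,0)\bigr)-\gamma\bigl(a(x,\D u_n)-a(x,0)\bigr) - \gamma a(x,0)$, applying your pointwise estimate to the first two terms, and absorbing the bounded constant $|\gamma a(x,0)|$ (via \ref{ass:A3}) into an additive $c$; this is exactly what produces the ``$1+$'' in \eqref{eq:IntroHigherInteg}. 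With that adjustment your argument is complete.
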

Further we obtain a similar result with regards to weighted estimates for perturbations of fields of $p$-Laplace type. We recall the following special case of the main result of \cite{Phuc2011}:
\begin{theorem}[c.f. Theorem 2.1 in \cite{Phuc2011}]\label{thm:higherRegMengesha} Let $1<p<q<\infty$ and let $w$ be an $A_{q/p}$ weight. Suppose $\Omega$ is a $C^1$-domain. Suppose $B\in VMO(\Omega)$ and $B$ satisfies 
\begin{align*}
\lambda |\xi|^2\leq B(x)\xi\cdot \xi\leq \Lambda |\xi|^2.
\end{align*} 
Then there exists a positive constant $C_2>0$ such that the following holds. For a given vector field ${F\in L^{q(p-1)}_w(\R^n)}$, there is a unique weak solution ${u\in W^{1,p}_0(\Omega)}$ of $\Div b(x,\D u)=\Div F$, satisfying $\D u\in L^q_w(\Omega)$ with the estimate
\begin{align*}
\|\D u\|_{L^q_w(\Omega)}\leq C_2 \|f\|_{L^\frac{q}{p-1}_w(\Omega)}.
\end{align*}
Here the constant $C_2$ depends only on $n, p, q, R, \lambda,\Lambda$ and $[w]_{q/ p}$.
\end{theorem}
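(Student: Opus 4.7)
The plan is to adapt the Caffarelli--Peral perturbation strategy for nonlinear Calder\'{o}n--Zygmund estimates to the weighted setting, exploiting Muckenhoupt $A_{q/p}$ theory. Existence and uniqueness of $u\in W^{1,p}_0(\Omega)$ of the reference problem follow from monotone operator theory, and the unweighted energy estimate $\|\D u\|_{L^p(\Omega)}^{p-1}\lesssim \|F\|_{L^{p'}(\Omega)}$ is standard from testing with $u$ itself.

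The heart of the argument is a local freezing comparison. Fix a small ball $B_r\Subset \Omega$, set $\bar B_r = \fint_{B_r} B\d x$, and let $h$ solve the frozen-coefficient reference problem $\Div(\bar B_r|\D h|^{p-2}\D h)=0$ in $B_r$ with $h=u$ on $\p B_r$. Classical Uhlenbeck-type regularity for $p$-Laplace systems with constant symmetric positive definite coefficients gives a sup-bound $\|\D h\|_{L^\infty(B_{r/2})}^p\lesssim \fint_{B_r}|\D h|^p\d x$. Subtracting the equations for $u$ and $h$, testing with $u-h$, and invoking John--Nirenberg to control the oscillation of $B$ via its VMO modulus $\omega(r)$, one obtains a comparison estimate of the form
$$\fint_{B_r}|\D(u-h)|^p\d x\lesssim \omega(r)^\theta \fint_{B_r}|\D u|^p\d x + \fint_{B_r}|F|^{p'}\d x,$$
for some $\theta>0$, with $\omega(r)\to 0$ as $r\to 0$.

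Combining the sup-bound on $\D h$ with the comparison and a Calder\'{o}n--Zygmund-type Vitali covering of super-level sets of the Hardy--Littlewood maximal function $M$, one derives a good-$\lambda$ inequality
$$\big|\{M(|\D u|^p)>K\lambda\}\cap B\big|\leq \varepsilon |B| + C\big|\{M(|F|^{p'})>\delta \lambda\}\cap B\big|,$$
where $\varepsilon$ can be made arbitrarily small by passing to a scale where the VMO modulus of $B$ is small. Multiplying by $\lambda^{q/p-1}w(x)$ and integrating in $\lambda$, using boundedness of $M$ on $L^{q/p}_w$ (which is guaranteed by $w\in A_{q/p}$) and absorbing the $\varepsilon$-term on the left-hand side, yields the weighted estimate on $\D u$. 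Boundary estimates near $\p\Omega$ proceed by flattening $\p\Omega$ using its $C^1$-regularity and repeating the argument with half-ball reference problems; a partition of unity glues interior and boundary bounds.

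The main obstacle is the quantitative comparison step: the dependence on the VMO modulus $\omega(r)$ must be explicit and sharp enough that smallness of $\omega(r)$ translates into smallness of the parameter $\varepsilon$ in the good-$\lambda$ inequality. The nonlinearity precludes a direct appeal to linear Calder\'{o}n--Zygmund theory; here the Uhlenbeck-type Lipschitz estimate for the frozen-coefficient problem plays the role that harmonic function regularity plays in the linear case, and securing the right power of $\omega(r)$ requires a careful use of the monotonicity of the $p$-Laplacian together with reverse H\"older self-improvement for $|\D h|^p$.
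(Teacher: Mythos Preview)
The paper does not prove this statement at all: Theorem~\ref{thm:higherRegMengesha} is quoted verbatim from \cite{Phuc2011} (note the ``c.f.\ Theorem 2.1 in \cite{Phuc2011}'' in the header) and is used as a black box input into the Koshelev iteration argument of Theorem~\ref{thm:KoshelevWeighted}. There is thus no ``paper's own proof'' to compare against.

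That said, your sketch is a faithful outline of the strategy actually used in the cited literature (Phuc, Mengesha--Phuc): the comparison with a frozen-coefficient $p$-Laplace system, Uhlenbeck-type Lipschitz regularity for the reference problem, a good-$\lambda$/Vitali covering argument for super-level sets of the maximal function, and the passage to weighted estimates via the $A_{q/p}$-boundedness of $M$. One technical point to be careful about if you flesh this out: the good-$\lambda$ inequality you wrote is stated in terms of Lebesgue measure, but to integrate against $w$ and absorb the $\varepsilon$-term you need the inequality at the level of $w$-measure; this is where the $A_\infty$ (or strong doubling) property of $w\in A_{q/p}$ enters, converting smallness of Lebesgue measure of the exceptional set into smallness of its $w$-measure. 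Also, in the cited works the comparison estimate is typically obtained not directly via John--Nirenberg on the VMO modulus but by combining a reverse H\"older inequality for $\D u$ with a small-BMO condition on $B$; this is what makes the power $\theta$ and the smallness of $\varepsilon$ quantitative.
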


We use the Koshelev iteration to obtain the following extension.
\begin{theorem}
Let $1<p<q<\infty$ and $w\in A_{q/p}$. Assume $\Omega$ is a $C^1$ domain and let $g=0$, $f\in L^\frac{q}{p-1}_w(\Omega)$. Suppose $b$ satisfies the assumptions of Theorem \ref{thm:higherRegMengesha}. Let $C_2$ be the constant from Theorem \ref{thm:higherRegMengesha}.
Suppose $a,b$ satisfy the assumptions of Theorem \ref{thm:KoshelevBasicIntro}.
If $$\frac{C_2^{(p-1)/q} K_{a,b}\Lambda_b}{(p-1)}<1,$$ then the solution $v$ of \eqref{eq:KoshProblem} satisfies the estimate
\begin{align}\label{eq:IntroWeighted}
\|\D v\|_{L^q_w(\Omega)}\leq c(C,p,q)\left(1+\|f\|_{L^\frac{q}{p-1}_w(\Omega)}^\frac{1}{p-1}\right)
\end{align}
\end{theorem}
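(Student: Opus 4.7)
The plan is to mirror the proof of the unweighted higher-integrability extension, replacing Kinnunen's estimate by Phuc's weighted estimate (Theorem \ref{thm:higherRegMengesha}). I run the Koshelev iteration from Theorem \ref{thm:KoshelevBasicIntro}, establish a uniform bound on the iterates in $L^q_w(\Omega)$, and pass to the limit to inherit the estimate on $v$.

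First, starting from $u_0 = 0$, form the iterates $u_{n+1}\in W^{1,p}_0(\Omega)$ solving $\Div b(x,\D u_{n+1}) = \Div F_n$ with $F_n = b(x,\D u_n) - \gamma a(x,\D u_n) + \gamma f$; by Theorem \ref{thm:KoshelevBasicIntro} they converge to $v$ in $W^{1,p}(\Omega)$ for an appropriate choice of $\gamma$. The pointwise perturbation estimate used in the proof of that theorem also drives the present argument: with $\gamma = 2/(\lambda_{a,b}+\Lambda_{a,b})$, the symmetry assumption \eqref{ass:A4} and the eigenvalue bound built into the definition of $K_{a,b}$ give $|I - \gamma B^{-1}(x,z)A(x,z)| \leq K_{a,b}$ pointwise in $x,z$. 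Writing
$$b(x,z) - \gamma a(x,z) = (b(x,0)-\gamma a(x,0)) + \int_0^1 \bigl(B - \gamma A\bigr)(x,tz)\, z\,\d t$$
and invoking the upper bound from \ref{ass:A2} with $\mu = 0$, one obtains
$$|b(x,z) - \gamma a(x,z)| \leq \frac{K_{a,b}\Lambda_b}{p-1}|z|^{p-1} + C,$$
with $C$ depending only on $a(x,0)$ and $b(x,0)$ via \ref{ass:A3}.

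Next, apply Phuc's weighted estimate from Theorem \ref{thm:higherRegMengesha} to each iterate $u_{n+1}$ with right-hand side $F_n$. Substituting the pointwise bound above and using the elementary inequality $(s+t)^{1/(p-1)} \lesssim s^{1/(p-1)} + t^{1/(p-1)}$ produces a recursion of the form
$$\|\D u_{n+1}\|_{L^q_w(\Omega)} \leq \theta\, \|\D u_n\|_{L^q_w(\Omega)} + C\bigl(1 + \|f\|_{L^{q/(p-1)}_w(\Omega)}^{1/(p-1)}\bigr),$$
where $\theta < 1$ precisely under the smallness condition $C_2^{(p-1)/q} K_{a,b}\Lambda_b/(p-1) < 1$. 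Iterating this recursion gives a uniform bound on $\|\D u_n\|_{L^q_w(\Omega)}$ depending only on $\|f\|_{L^{q/(p-1)}_w(\Omega)}^{1/(p-1)}$. Since $\D u_n \to \D v$ strongly in $L^p(\Omega)$, a subsequence converges a.e., so Fatou's lemma applied with respect to the measure $w\,\d x$ yields $\|\D v\|_{L^q_w(\Omega)} \leq \liminf_n \|\D u_n\|_{L^q_w(\Omega)}$, which delivers \eqref{eq:IntroWeighted}.

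The main obstacle is the careful bookkeeping of scaling in Phuc's estimate: matching the exponent of $C_2$ appearing in the recursion to the factor $C_2^{(p-1)/q}$ in the hypothesized threshold requires tracking precisely how an $L^{q/(p-1)}_w$ control of a $(p-1)$-homogeneous quantity in $\D u_n$ propagates to an $L^q_w$ control of $\D u_{n+1}$ through the nonlinear divergence-form equation, and how this interacts with raising both sides to the power $1/(p-1)$. Once this scaling is correctly identified, the contraction argument and the limit passage proceed mechanically, and all other ingredients are either inherited from Theorem \ref{thm:KoshelevBasicIntro} or are standard consequences of the weighted Lebesgue space framework.
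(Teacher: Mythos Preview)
Your proposal is correct and follows essentially the same route as the paper: run the iteration from $u_0=0$, use the mean-value representation together with Lemma~\ref{lem:normEstimate} to bound $|b(x,\D u_n)-\gamma a(x,\D u_n)|$ pointwise by $\tfrac{K_{a,b}\Lambda_b}{p-1}|\D u_n|^{p-1}$ plus a constant, feed this into Theorem~\ref{thm:higherRegMengesha} to obtain a linear recursion for $\|\D u_n\|_{L^q_w}$ with contraction factor governed by $C_2^{(p-1)/q}K_{a,b}\Lambda_b/(p-1)$, and pass to the limit. The only cosmetic difference is that the paper passes to the limit via a weakly convergent subsequence in $L^q_w$, whereas you use a.e.\ convergence of a subsequence and Fatou with respect to $w\,\d x$; both are valid and yield the same conclusion.
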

\begin{remark}
The result can easily be extended to Reifenberg-flat domains, where the corresponding estimate for $b$, needed to replace Theorem \ref{thm:higherRegMengesha} is due to \cite{Mengesha2012}.
\end{remark}

The structure of this paper is as follows. In Section \ref{sec:notation} we explain our notation and present a number of preliminary results. In Section \ref{sec:KoshelevBasic} we show convergence in $W^{1,p}(\Omega)$ of the iterative process. In Section \ref{sec:numerical} we present our numerical experiments regarding the iterative process. In Section \ref{sec:higherReg} we use the iterative process to prove higher differentiability and weighted estimates of perturbations to equations with $p$-Laplace structure.

\section{Preliminaries and notation}\label{sec:notation}
$c$ will denote a a positive constant depending only on $\Omega,N,p$ that may change from line to line. We write $a\lesssim b$ if there is $c>0$ depending only on $\Omega,N,p$ such that $a\leq c b$.

Throughout $\Omega\subset\R^n$ will be a domain.
Let $1\leq p<\infty$. We denote by $L^p(\Omega)=L^p(\Omega,\R^N)$ and $W^{1,p}(\Omega)=W^{1,p}(\Omega,\R^N)$ the usual Lebesgue and Sobolev spaces respectively. $VMO(\Omega)$ denotes the space of maps with vanishing mean oscillation. Further $p'$ will denote the H\"older conjugate of $p$. We also employ the standard fractional Sobolev spaces $W^{k,p}(\Omega)$ where ${k\in (0,\infty)}$, whose theory can be found for example in \cite{Triebel1983}. We recall in particular the following fact from \cite{Guermond2017}: if $p\in(1,\infty)$, $\alpha>0$ and $\Omega$ is a regular polytope, then for $u\in W^{1+\alpha,p}(\Omega)$ the best approximation $v$ to $u$ in the space of continuous piecewise linear functions subordinate to a shape-regular triangulation of $\Omega$ of mesh-size $h$ satisfies
\begin{align}\label{eq:bestApproximation}
\|u-v\|_{W^{1,p}_\infty(\Omega)}\lesssim h^\alpha \|u\|_{W^{1+\alpha,p}_\infty(\Omega)}.
\end{align}

For $z\in \R^n$, $r>0$ we denote by $B_r(z)$ the open ball of radius $r$ around $z$.

Given vectors $u, v\in \R^n$ we denote by $|v|$ the Euclidean norm and we denote the inner product in $\R^n$ by $u\cdot v$.

Given a matrix $A\in \R^{n \times n}$ we denote by $\|A\|$ the operator norm of $A$. If $A$ is positive definite, we denote its smallest eigenvalue by $\lambda_A$ and its largest by $\Lambda_A$. In particular when $a\equiv a(x,z)\colon \Omega\times \R^{N\times n}\to \R^{N\times n}$ is such that $a(x,\cdot)$ is differentiable in $\R^{N\times n}$ for almost every $x\in\Omega$, we view $\p_z a(x,y,z)$ both as a matrix and as a linear form. If $\p_z a(x,y,z)$ is positive definite, we denote $\lambda_a = \lambda_A$, $\Lambda_a=\Lambda_A$.

We recall the definition of Muckenhoupt weights. For a fixed $1 < p <\infty$, we say that a weight $w \colon \R^n\to [0,\infty)$ belongs to $A_p$ if $w$ is locally integrable and there is a constant $C>0$ such that, for all balls $B$ in $\R^n$, we have
$$\fint_B w(x)\d x \left(\fint_B w^{-\frac q p}\d x\right)^\frac p q \leq C<\infty.$$
Given a weight $w$ on $\Omega$, we denote the weighted Lebesgue-spaces by $L^p_w(\Omega)$.

Given $\mu\geq 0$, $p\in(1,\infty)$ and $v\in \R^n$ we write $V_{\mu,p}(v) = (\mu^2+|v|^2)^\frac{p-2}{4}v$. When the choice of $p$ is clear from the context we suppress the index and write $V_{\mu}(v)=V_{\mu,p}(v)$.

We recall some well-known tools for dealing with $p$-growth. The following Lemma is standard, but the author has been unable to find a version in the literature with explicit bounds that tend to $1$ as $\gamma\to 0$. Hence a proof of the bounds of this type shown in the version below can be found in the appendix.
\begin{lemma}\label{lem:VFuncEquiv}
Let $\xi,\eta\in \R^m$ for some $m>0$. For $\gamma\geq 0$ and $\mu\geq 0$ we have
  \[
  \frac{1}{6^{\gamma}(2\gamma+1)}(\mu^2+|\eta|^2+|\eta-\xi|^2)^\gamma\leq \int_0^1 (\mu^2+|t\xi+(1-t)\eta|^2)^\gamma\d t\leq 2^\gamma(\mu^2+|\eta|^2+|\eta-\xi|^2)^\gamma
 \]
 If $\gamma\in (-1/2,0]$ we have
 \begin{align*}
   2^\gamma(\mu^2+|\eta|^2+|\eta-\xi|^2)^\gamma\leq \int_0^1 (\mu^2+|t\xi+(1-t)\eta|^2)^\gamma\d t\leq \frac{1}{4^\gamma(\gamma+1)}(\mu^2+|\eta|^2+|\xi-\eta|^2)^\gamma
 \end{align*}
\end{lemma}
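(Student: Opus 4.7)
The upper bound for $\gamma \geq 0$ and the lower bound for $\gamma \in (-1/2, 0]$ both reduce to a single pointwise inequality. Writing $t\xi + (1-t)\eta = \eta + t(\xi - \eta)$, an application of the triangle inequality gives $|t\xi + (1-t)\eta|^2 \leq 2|\eta|^2 + 2t^2|\eta-\xi|^2 \leq 2(|\eta|^2 + |\eta-\xi|^2)$, hence $\mu^2 + |t\xi + (1-t)\eta|^2 \leq 2(\mu^2 + |\eta|^2 + |\eta-\xi|^2)$ uniformly in $t$. For $\gamma \geq 0$ the inequality is preserved under the $\gamma$-th power (yielding the $2^\gamma$ upper bound), while for $\gamma \leq 0$ it is reversed (yielding the $2^\gamma$ lower bound); in both cases the conclusion follows after integrating over $t \in [0,1]$.

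For the remaining two bounds I would first reduce to a canonical form by completing the square. Assuming $w := \xi - \eta \neq 0$ (the case $w = 0$ is trivial), set $\alpha := -\eta\cdot w/|w|^2$ and $\beta^2 := |\eta|^2 - \alpha^2|w|^2 \geq 0$; then $|\eta + tw|^2 = |w|^2(t-\alpha)^2 + \beta^2$. The substitution $s = (t-\alpha)|w|$ transforms the integral into
\[
\frac{1}{|w|}\int_{-\alpha|w|}^{(1-\alpha)|w|} (\tilde\mu^2 + s^2)^\gamma \d s, \qquad \tilde\mu^2 := \mu^2 + \beta^2,
\]
over an interval of length exactly $|w|$, with target quantity $\mu^2 + |\eta|^2 + |\eta-\xi|^2 = \tilde\mu^2 + (\alpha^2+1)|w|^2$. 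A short case analysis in $\alpha \in [0,1]$, $\alpha > 1$, $\alpha < 0$ shows that the absolute value of the endpoint of $[-\alpha|w|, (1-\alpha)|w|]$ farthest from the origin satisfies $M^2 \geq c|w|^2(\alpha^2+1)$ for an absolute constant $c$, providing the bridge between the integration domain and the target.

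For the lower bound when $\gamma \geq 0$ I would exploit the monotonicity of $s\mapsto (\tilde\mu^2 + s^2)^\gamma$ in $|s|$: the integral dominates $\int_0^M (\tilde\mu^2 + s^2)^\gamma \d s \geq \int_0^M s^{2\gamma}\d s = M^{2\gamma+1}/(2\gamma+1)$, which is the source of the factor $(2\gamma+1)^{-1}$. Combining this with the comparison between $M^2$ and $|w|^2(\alpha^2+1)$, together with a second split according to whether $\tilde\mu^2$ or $|w|^2(\alpha^2+1)$ dominates in the target expression, yields the factor $6^\gamma$. For the upper bound when $\gamma \in (-1/2, 0]$ the integrand peaks at $s=0$, so I would split according to whether $0$ lies in $[-\alpha|w|, (1-\alpha)|w|]$; in the interior case one enlarges the integration domain to a symmetric neighbourhood of the origin and uses the explicit antiderivative of $(\tilde\mu^2 + s^2)^\gamma$, producing the factor $(\gamma+1)^{-1}$, which is finite precisely because $\gamma > -1/2$. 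In the other case the integrand is majorised by its value at the endpoint nearest $0$ and the same form is recovered.

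The main obstacle is tracking the explicit constants: because both inequalities must degenerate to equality as $\gamma \to 0$, no step in the chain can contribute a factor that fails to tend to $1$ in this limit. This rules out loose pointwise majorisations such as $|A+B|^2 \leq 4(|A|^2+|B|^2)$ (the tighter $\leq 2(|A|^2+|B|^2)$ is essential), excludes Jensen-type reductions that introduce $\gamma$-independent slack for $\gamma \in [0,1]$, and forces the case-wise estimates on $\alpha$ to be carried out with enough care that extraneous constants do not accumulate.
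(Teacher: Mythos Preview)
Your treatment of the two ``easy'' bounds (the upper bound for $\gamma\geq 0$ and the lower bound for $\gamma\leq 0$) is identical to the paper's: the pointwise inequality $\mu^2+|\eta+t(\xi-\eta)|^2\leq 2(\mu^2+|\eta|^2+|\eta-\xi|^2)$, followed by integration.

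For the two ``hard'' bounds you take a genuinely different route. The paper does not complete the square. Instead it normalises so that $|\eta|=1$ (after reducing by symmetry of the integral to the case $|\eta|\geq|\xi|$), applies the reverse triangle inequality $|\eta+t(\xi-\eta)|\geq\bigl|1-t\,|\xi-\eta|\bigr|$, and then performs an explicit case analysis on $s=|\xi-\eta|\in[0,1]$ versus $s\in(1,2]$, computing exact antiderivatives of $(\mu+1-ts)^{2\gamma}$ and checking monotonicity of the resulting auxiliary functions of $s$ by differentiation. Your reduction via completing the square is cleaner in that it is an exact identity rather than a one-sided inequality, and it funnels everything into the single model integral $\int(\tilde\mu^2+s^2)^\gamma\d s$ over an interval of length $|w|$; the price is that the case analysis shifts to the location parameter $\alpha$, and the bookkeeping needed to extract the precise constants is only sketched. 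Two points in your outline deserve tightening. First, the claim that the integral dominates $\int_0^M(\tilde\mu^2+s^2)^\gamma\d s$ is only correct when $0$ lies in the interval of integration (i.e.\ $\alpha\in[0,1]$); for $\alpha\notin[0,1]$ the interval has length $|w|<M$ and the inequality fails as written --- the correct replacement is that a translation of the interval towards the origin only decreases the integrand. Second, the factor you describe as $(\gamma+1)^{-1}$, which you correctly note must blow up at the endpoint $\gamma=-1/2$, is in fact $(2\gamma+1)^{-1}$; this is also what the paper's own computation in the appendix produces, so the constant $(\gamma+1)$ in the stated lemma appears to be a typo for $(2\gamma+1)$.
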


We also recall that the $V_\mu$-functional enjoys a Young-type inequality.
\begin{lemma}[cf.\cite{Acerbi2002}, Lemma 2.3]\label{lem:VYoung}
Let $x,y\in \R^{N \times n}$, $\mu\geq 0$ and $1\leq p$. Let $\e>0$. Then with 
\begin{align*}
C_\e = \max\left(\frac 1 {4\e},\frac{(p-1)^{p-1}}{p^p\e^{p-1}}\right),
\end{align*}
it holds that
\begin{align*}
(\mu^2+|x|^2)^\frac{p-2}{2}x\cdot y\leq \e |V_\mu(x)|^2+
C_\e |V_\mu(y)|^2.
\end{align*}
\end{lemma}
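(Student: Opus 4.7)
The plan is to apply Cauchy--Schwarz to reduce the statement to bounding the scalar quantity
\[
I := (\mu^2+|x|^2)^{(p-2)/2}|x||y|,
\]
and then to split into two cases that correspond exactly to the two expressions appearing in the definition of $C_\e$, using a different application of Young's inequality in each.

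In the first case, when either $p\geq 2$ with $|x|\leq |y|$, or $p\leq 2$ with $|x|\geq |y|$, the function $t\mapsto (\mu^2+t)^{(p-2)/2}$ evaluated at $|x|^2$ is no larger than at $|y|^2$, so $(\mu^2+|x|^2)^{(p-2)/2}|y|^2\leq |V_\mu(y)|^2$. Factoring $I = \big[(\mu^2+|x|^2)^{(p-2)/4}|x|\big]\cdot \big[(\mu^2+|x|^2)^{(p-2)/4}|y|\big]$ and applying AM--GM (Young's with exponents $2,2$) then yields the claim with $C_\e = 1/(4\e)$.

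In the complementary regime --- $p\geq 2$ with $|x|\geq|y|$, or $p\leq 2$ with $|x|\leq|y|$ --- I would compare $I$ with the mixed quantity $|V_\mu(x)|^{2(p-1)/p}|V_\mu(y)|^{2/p}$; a direct computation gives
\begin{align*}
\frac{I}{|V_\mu(x)|^{2(p-1)/p}|V_\mu(y)|^{2/p}} = \left[\frac{(\mu^2+|x|^2)|y|^2}{(\mu^2+|y|^2)|x|^2}\right]^{(p-2)/(2p)},
\end{align*}
and the elementary equivalence $(\mu^2+|x|^2)|y|^2\leq (\mu^2+|y|^2)|x|^2 \Leftrightarrow |y|\leq |x|$, combined with the sign of $(p-2)/(2p)$, shows that in the regime under consideration this ratio is $\leq 1$. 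It then suffices to apply Young's inequality in the form $ab\leq \e a^{p'} + \frac{(p-1)^{p-1}}{p^p\e^{p-1}} b^p$ with $a = |V_\mu(x)|^{2(p-1)/p}$ and $b = |V_\mu(y)|^{2/p}$ (the exponents $2(p-1)/p$ and $2/p$ being forced by $a^{p'} = |V_\mu(x)|^2$ and $b^p = |V_\mu(y)|^2$), yielding the claim with $C_\e = (p-1)^{p-1}/(p^p\e^{p-1})$.

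Taking the maximum of the two constants then gives the stated bound. The main obstacle I anticipate is identifying the correct mixed product $|V_\mu(x)|^{2(p-1)/p}|V_\mu(y)|^{2/p}$ in the complementary case and recognising that its ratio with $I$ depends on $|x|,|y|$ only through the single sign condition above; once this is done, both cases reduce to textbook Young's inequalities.
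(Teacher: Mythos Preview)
Your argument is correct. The paper does not supply its own proof of this lemma --- it is stated with a citation to Acerbi--Mingione --- so there is no in-paper argument to compare against. Your two-case split, governed by the sign of $(\mu^2+|x|^2)^{(p-2)/2}-(\mu^2+|y|^2)^{(p-2)/2}$, together with Young's inequality with exponents $(2,2)$ in the first case and $(p',p)$ in the second, is exactly what produces the two branches in the definition of $C_\e$; the ratio computation verifying $I\leq |V_\mu(x)|^{2(p-1)/p}|V_\mu(y)|^{2/p}$ in the complementary regime is also correct.
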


Further we note the following estimate:
\begin{lemma}\label{lem:triangleEstimate}
For $a,b\in\R^n$ we have
\begin{align*}
\frac{3-\sqrt{5}}{2}(|a|^2+|b|^2)\leq|a|^2+|a-b|^2\leq \frac{3+\sqrt{5}}{2}(|a|^2+|b|^2)
\end{align*}
\end{lemma}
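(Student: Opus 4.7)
The plan is to reduce the statement to a scalar inequality via the Cauchy-Schwarz inequality. Expanding $|a-b|^2 = |a|^2 - 2a\cdot b + |b|^2$, the claim becomes
\[
\frac{3-\sqrt 5}{2}(|a|^2+|b|^2) \leq 2|a|^2 - 2a\cdot b + |b|^2 \leq \frac{3+\sqrt 5}{2}(|a|^2+|b|^2).
\]
Since only the inner product $a\cdot b$ depends on the angle between $a$ and $b$, and $-|a||b| \leq a\cdot b \leq |a||b|$, the upper bound is hardest when $-2a\cdot b$ is as large as possible (i.e.\ $a\cdot b = -|a||b|$) and the lower bound is hardest when $-2a\cdot b$ is as small as possible (i.e.\ $a\cdot b = |a||b|$). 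It therefore suffices to verify the two inequalities
\[
\frac{\sqrt 5 - 1}{2}|a|^2 - 2|a||b| + \frac{\sqrt 5 + 1}{2}|b|^2 \geq 0, \qquad \frac{\sqrt 5 + 1}{2}|a|^2 - 2|a||b| + \frac{\sqrt 5 - 1}{2}|b|^2 \geq 0,
\]
which together are equivalent to the claimed bounds.

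Each of the expressions above is a quadratic form in $(|a|,|b|)$. The key algebraic observation is that its discriminant vanishes: indeed
\[
(-2)^2 - 4\cdot \frac{(\sqrt 5 -1)(\sqrt 5 + 1)}{4} = 4 - (5-1) = 0,
\]
so each form is in fact a perfect square, e.g.
\[
\frac{\sqrt 5 + 1}{2}|a|^2 - 2|a||b| + \frac{\sqrt 5 - 1}{2}|b|^2 = \left(\sqrt{\tfrac{\sqrt 5 + 1}{2}}\,|a| - \sqrt{\tfrac{\sqrt 5 - 1}{2}}\,|b|\right)^2 \geq 0,
\]
and similarly for the other one. This immediately yields both bounds.

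I do not anticipate any real obstacle: the main point is the reduction to a scalar quadratic via Cauchy-Schwarz, and the numerical values $(3\pm\sqrt 5)/2$ are exactly those that make the resulting discriminants vanish, so the proof is essentially a verification. The equality cases (attained when $a,b$ are collinear with $|b|/|a|=(\sqrt 5 - 1)/2$ in the upper bound and with the opposite orientation in the lower bound) provide a sanity check that the constants are sharp.
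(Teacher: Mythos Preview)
Your argument is correct. The paper actually states Lemma~\ref{lem:triangleEstimate} without proof, so there is nothing to compare against; your reduction via Cauchy--Schwarz to a scalar quadratic with vanishing discriminant is a clean and complete verification. One small quibble: in your parenthetical about equality cases, the orientations are swapped (the upper bound is attained when $a$ and $b$ point in \emph{opposite} directions, the lower bound when they point in the \emph{same} direction), and the ratio $|b|/|a|$ in the lower-bound equality case is $(\sqrt 5+1)/2$ rather than $(\sqrt 5-1)/2$; but this remark is not needed for the proof itself.
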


We close this section by recalling a linear algebra result from \cite{Koshelev1991}. Let $A\in \R^{n\times n}$ be positive definite. Write $A=A^+A^-$ where $A^+, A^-$ are the symmetric and skew-symmetric part of $A$, respectively. Set $C=A^+A^--A^-A^+-(A^-)^2$ and denote by $\sigma$ the largest eigenvalue of $C$. Define
\begin{align*}
K_\gamma\coloneqq \inf_{\gamma>0}\|I-\gamma A\|.
\end{align*}
Then we have
\begin{lemma}[c.f. Lemma 1.1.2 in \cite{Koshelev1991}]\label{lem:KoshelevLinearAlgebra} The optimal constant $K_\gamma$ is achieved with the following choice:
\begin{align*}
\begin{cases}
K_\gamma^2 = \dfrac{\sigma}{\sigma + \lambda_{A}^2}, \qquad \gamma = \dfrac{\lambda_{A}}{\sigma + \lambda_{A}^2} \quad& \text{ if } \sigma \geq \dfrac{\lambda_{A}(\Lambda_{A}-\lambda_{A})}{2}\\[20pt]
K_\gamma^2 = \dfrac{(\Lambda_{A}-\lambda_{A})^2+4\sigma}{(\Lambda_{A}+\lambda_{A})^2}, \quad \gamma = \dfrac{2}{\Lambda_{A}+\lambda_{A}} \quad& \text{ if } \sigma \leq \dfrac{\lambda_{A}(\Lambda_{A}-\lambda_{A})}{2}\\
\end{cases}
\end{align*}
\end{lemma}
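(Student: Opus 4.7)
The plan is to reduce the computation of $\|I-\gamma A\|^2$ to the largest eigenvalue of a symmetric matrix via the identity $(I-\gamma A)^\top(I-\gamma A) = (I-\gamma A^+)^2 + \gamma^2 C$, bound it using Weyl's inequality, and then minimise an explicit one-dimensional function of $\gamma$. Writing $A = A^+ + A^-$ with $(A^-)^\top = -A^-$, so that $A^\top = A^+ - A^-$, a direct calculation verifies the displayed identity. The matrix $C$ is symmetric, since $-(A^-)^2 = (A^-)^\top A^-$ and $(A^+A^-)^\top = -A^-A^+$ imply $C^\top = C$; hence $\sigma$ is genuinely the largest eigenvalue of a symmetric matrix, and $\|I-\gamma A\|^2$ equals the largest eigenvalue of $(I-\gamma A^+)^2 + \gamma^2 C$.

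I would then apply subadditivity of the largest eigenvalue (Weyl's inequality) to obtain
\[
\|I-\gamma A\|^2 \leq \max\bigl\{(1-\gamma\lambda_A)^2,(1-\gamma\Lambda_A)^2\bigr\} + \gamma^2\sigma,
\]
using that $A^+$ is symmetric with smallest eigenvalue $\lambda_A$ and largest eigenvalue $\Lambda_A$. Denote the right-hand side by $F(\gamma)$; the task reduces to minimising $F$ over $\gamma > 0$. Split at the crossover $\gamma_0 = 2/(\lambda_A+\Lambda_A)$: on $(0,\gamma_0]$ the function $F(\gamma) = (1-\gamma\lambda_A)^2 + \gamma^2\sigma$ is a convex quadratic with unconstrained minimiser $\gamma^* = \lambda_A/(\sigma+\lambda_A^2)$ and value $\sigma/(\sigma+\lambda_A^2)$. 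Elementary algebra shows $\gamma^* \leq \gamma_0$ iff $\sigma \geq \lambda_A(\Lambda_A-\lambda_A)/2$, which is Case 1. When this fails (Case 2), the constrained minimum on $(0,\gamma_0]$ sits at $\gamma_0$; the analogous analysis on $[\gamma_0,\infty)$ would demand $\sigma \leq -\Lambda_A(\Lambda_A-\lambda_A)/2$ for an interior critical point, which is ruled out by $\sigma \geq 0$ (since $\mathrm{tr}(C) = -\mathrm{tr}((A^-)^2) \geq 0$, as $(A^-)^2$ is negative semidefinite). So in Case 2 the optimum is at $\gamma_0$, with value $((\Lambda_A-\lambda_A)^2+4\sigma)/(\Lambda_A+\lambda_A)^2$.

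The main obstacle is showing that the Weyl-inequality step is actually tight at the optimal $\gamma$, so that $K_\gamma$ equals (and is not merely bounded by) the stated values. This requires producing a unit vector on which the extremal eigenvalues of $(I-\gamma A^+)^2$ and of $\gamma^2 C$ are attained simultaneously; in Koshelev's argument this is achieved by reducing to a suitable two-dimensional invariant subspace on which an explicit extremiser can be constructed. Once tightness is in hand, the case distinction above is routine algebraic bookkeeping.
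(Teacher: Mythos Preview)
The paper does not actually prove this lemma; it is quoted from \cite{Koshelev1991} and only the statement and the subsequent Remark appear. So there is no in-paper argument to compare against, and your proposal should be read as a reconstruction of Koshelev's proof.

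Your approach is sound and, as far as the paper's needs go, complete. The identity $(I-\gamma A)^\top(I-\gamma A)=(I-\gamma A^+)^2+\gamma^2 C$ is correct, the Weyl step gives the upper bound $\|I-\gamma A\|^2\le F(\gamma)$, and your minimisation of $F$ and the case split at $\gamma_0=2/(\lambda_A+\Lambda_A)$ are carried out correctly, including the verification that $\sigma\ge 0$. This already delivers exactly what the paper uses: the Remark following the lemma explicitly says that only the inequality $\|I-\gamma A\|\le K_\gamma$ (with the stated $\gamma$) is extracted from the proof, and that it persists when $\lambda_A,\Lambda_A,\sigma$ are replaced by bounds. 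Every subsequent application in the paper (Lemma~\ref{lem:normEstimate} and Remark~\ref{rem:nonsymmetric}) invokes only this upper bound.

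You are also right that the Weyl step is in general not tight, so the full statement that $K_\gamma$ \emph{equals} the displayed value (and not merely is bounded by it) is not established by your argument. Your description of what is missing---a common extremising vector for $(I-\gamma A^+)^2$ and $C$, obtained in Koshelev's treatment via a two-dimensional reduction---is accurate. One small point: you identify $\lambda_A,\Lambda_A$ with the extreme eigenvalues of $A^+$; this is the intended reading in the non-symmetric case, though the paper's notational convention is stated only for $A$ itself.
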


\begin{remark}
An inspection of the proof shows that the inequality $\|I-\gamma A\|\leq K_\gamma$ still holds when $\lambda_A, \Lambda_A$ and $\sigma$ are replaced by any upper bound for the smallest and largest eigenvalue of $A$ and the largest eigenvalue of $C$, respectively, in the definition of $K_\gamma$ and $\gamma$.
\end{remark}

\section{The iterative process}\label{sec:KoshelevBasic}
We call $b\colon \Omega\times \R^{N \times n}\to\R^{N\times n}$ a reference field if the following holds: $b$ satisfies $\mathrm{\ref{ass:A1}-\ref{ass:A3}}$ and \eqref{ass:A4}. In addition, whenever $F\in L^{p'}(\Omega,\R^{N \times n})$ and $g\in W^{1,p}(\Omega,\R^N)$, there exists a unique solution $u\in W^{1,p}_g(\Omega)$ to the problem
\begin{align}\label{eq:goodOp}
\Div b(x,\D u) = \Div F.
\end{align}
\eqref{eq:goodOp} is to be understood in the sense of distributions.

\begin{remark}
The $p$-Laplacian $|z|^{p-2}z$ is an example of such a field. We encourage the reader to think of this case on a first reading.
\end{remark}

Recall the definition of the iterative process.
Let ${u_0\in W^{1,p}_g(\Omega)}$ and $f\in L^{p'}(\Omega)$. Take $\gamma>0$ to be chosen later. Define $u_{n+1}\in W^{1,p}_g(\Omega)$ to be the weak solution of the problem \eqref{eq:goodOp} with the choice
\begin{align}\label{eq:KoshIter}
F = b(x,\D u_n)-\gamma a(x,\D u_n)+ \gamma f.
\end{align}
Note that \ref{ass:A3} ensures that $F\in L^{p'}(\Omega,\R^{N \times n})$, so that the sequence $\{u_n\}$ is well-defined.

We want to show that $u_n\to u$ in $W^{1,p}(\Omega)$ where $u$ is a solution of \eqref{eq:KoshProblem}.
The crucial observation to prove convergence is the following linear algebra observation:
\begin{lemma}\label{lem:normEstimate}
 Suppose $A,B\in \mathbb{R}^{m\times m}$ are positive definite and symmetric. Then, with the choice $\gamma_* = \frac{2}{\Lambda_{B^{-1}A}+\lambda_{B^{-1}A}}$ and
 \[
  K=\frac{\Lambda_{B^{-1}A}-\lambda_{B^{-1}A}}{\Lambda_{B^{-1}A}+\lambda_ {B^{-1}A}}<1,
 \]
 the estimate
 \[
  \|B-\gamma_* A\|\leq K\|B\| 
 \]
 holds.
\end{lemma}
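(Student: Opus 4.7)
The natural approach is to factor out $B$ from the expression $B - \gamma_* A$ in a way that produces a symmetric matrix whose eigenvalues are controlled by $\lambda_{B^{-1}A}$ and $\Lambda_{B^{-1}A}$. Since $B$ is symmetric positive definite, its square root $B^{1/2}$ exists, is symmetric positive definite, and is invertible. The plan is to write
\begin{equation*}
B - \gamma_* A = B^{1/2}\bigl(I - \gamma_* B^{-1/2} A B^{-1/2}\bigr) B^{1/2},
\end{equation*}
which is immediate since $B^{1/2}\cdot I\cdot B^{1/2}=B$ and $B^{1/2}(B^{-1/2}AB^{-1/2})B^{1/2}=A$. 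The key point is that the middle factor, unlike $B^{-1}A$, is now symmetric.

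Set $C = B^{-1/2} A B^{-1/2}$. First, I would observe that $C$ is symmetric and positive definite, and that $C$ is similar to $B^{-1}A$ via the conjugation $B^{-1/2} C B^{1/2} = B^{-1}A$. In particular, $C$ and $B^{-1}A$ share the same spectrum, so the eigenvalues of $C$ lie in $[\lambda_{B^{-1}A},\Lambda_{B^{-1}A}]$. Because $C$ is symmetric, the operator norm of $I - \gamma_* C$ equals the spectral radius, and a direct one-variable computation with the optimal choice $\gamma_* = 2/(\Lambda_{B^{-1}A} + \lambda_{B^{-1}A})$ gives
\begin{equation*}
\|I - \gamma_* C\| = \max\bigl(|1 - \gamma_*\lambda_{B^{-1}A}|,\; |1 - \gamma_*\Lambda_{B^{-1}A}|\bigr) = \frac{\Lambda_{B^{-1}A} - \lambda_{B^{-1}A}}{\Lambda_{B^{-1}A} + \lambda_{B^{-1}A}} = K.
\end{equation*}
(Alternatively, one can invoke Lemma \ref{lem:KoshelevLinearAlgebra} applied to the symmetric matrix $C$, for which $\sigma = 0$, which falls into the second case of the lemma and yields the same value of $K$.)

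Finally, by submultiplicativity of the operator norm applied to the factorisation above,
\begin{equation*}
\|B - \gamma_* A\| \leq \|B^{1/2}\|\,\|I - \gamma_* C\|\,\|B^{1/2}\| = \|B^{1/2}\|^2\, K.
\end{equation*}
Since $B$ is symmetric positive definite, $\|B^{1/2}\|^2 = \Lambda_{B^{1/2}}^2 = \Lambda_B = \|B\|$, which yields the desired estimate. There is no substantive obstacle; the only conceptual point is recognising that although $B^{-1}A$ need not be symmetric (which is what would make a direct spectral argument fail), the symmetric surrogate $B^{-1/2}AB^{-1/2}$ has the same spectrum and makes the operator-norm computation entirely elementary.
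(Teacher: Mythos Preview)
Your proof is correct and takes a slightly different route than the paper. The paper factors as $B-\gamma A = B(I-\gamma B^{-1}A)$, applies submultiplicativity to obtain $\|B-\gamma A\|\leq \|B\|\,\|I-\gamma B^{-1}A\|$, and then invokes Lemma~\ref{lem:KoshelevLinearAlgebra} directly on $B^{-1}A$. You instead pass to the square root and work with the symmetric conjugate $C=B^{-1/2}AB^{-1/2}$, which is similar to $B^{-1}A$ and hence shares its spectrum. The advantage of your route is that $C$ is genuinely symmetric, so $\|I-\gamma_* C\|$ equals its spectral radius and the value $K$ drops out of an elementary one-variable computation; Lemma~\ref{lem:KoshelevLinearAlgebra} is not strictly needed. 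By contrast, $B^{-1}A$ is in general \emph{not} symmetric (unless $A$ and $B$ commute), so the paper's direct appeal to Lemma~\ref{lem:KoshelevLinearAlgebra} would, taken at face value, produce a constant involving the skew contribution $\sigma$; your symmetric-surrogate argument makes transparent why the stated $K$ is nonetheless the right bound.
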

\begin{proof}
  By Lemma \ref{lem:KoshelevLinearAlgebra} there holds
  \begin{align*}
   \|B-\gamma A\|\leq& \|B\|\|I-\gamma B^{-1}A\|
   \leq \|B\| \frac{\Lambda_{B^{-1}A}-\lambda_{B^{-1}A}}{\Lambda_{B^{-1}A}+\lambda_{B^{-1}A}}
   = \|B\| K
  \end{align*}
  The moreover part follows immediately from Lemma \ref{lem:KoshelevLinearAlgebra}.
\end{proof}

\begin{remark}\label{rem:nonsymmetric}
Using the remark after Lemma \ref{lem:KoshelevLinearAlgebra}, whenever $a$ and $b$ are fields satisfying $\mathrm{\ref{ass:A1}-\eqref{ass:A4}}$, we can use this Lemma to ensure the existence of a $\gamma>0$ so that the inequality $\|\p_z b(x,z)-\gamma \p_z a(x,z)\|\leq K_{a,b}\|B\|$ holds uniformly in $x$ and $z$. We define
\begin{align}\label{def:Kg}
K_\gamma \coloneqq \inf \{c>0\colon \|B(x,z)-\gamma A(x,z)\|\leq c \|B(x,z)\|\text{ for almost all } x\in\Omega, \text{ every } z\in \R^{N\times n}\}.
\end{align}

In the case where $A,B$ are non-symmetric, the choice of $\gamma$ and $K$ needs to be modified with the obvious adaptions to the above proof coming from Lemma \ref{lem:KoshelevLinearAlgebra}.
With these modifications the symmetry assumption \eqref{ass:A4} may be dropped in all results of this paper.

\end{remark}

We are now able to prove Theorem \ref{thm:KoshelevBasicIntro}, which we restate for the convenience of the reader.

\begin{theorem}\label{thm:KoshelevBasic}
Suppose $b\colon \Omega \times \R^{N \times n}\to \R^N$ is a reference field. Consider $a\colon \Omega \times \R^{N \times n}\to \R^N$ satisfying $\mathrm{\ref{ass:A1}-\ref{ass:A3}}$ and \eqref{ass:A4}. If 
\begin{align*}
\begin{cases}
\dfrac{\Lambda_b}{\lambda_b}K_{a,b} \left(\dfrac{3+\sqrt{5}}{2}\right)^\frac{p-2}{2p} 6^{p-2}(p-1)< 1 &\quad\text{ for } p\geq 2 \\[20pt]
\dfrac{\Lambda_b}{\lambda_b}K_{a,b} \left(\dfrac{3-\sqrt{5}}{2}\right)^\frac{(p-2)}{4}\dfrac{2^{2-p}}{p-1}< 1 &\quad\text{ for } p\leq 2
\end{cases}
\end{align*}
then \eqref{eq:KoshProblem} has a unique solution $u\in W^{1,p}_g(\Omega)$. Moreover, there is $\gamma>0$ such that if the sequence ${u_n\in W^{1,p}_g(\Omega)}$ is generated via \eqref{eq:KoshIter}, then $u_n\to u$ in $W^{1,p}(\Omega)$. 
\end{theorem}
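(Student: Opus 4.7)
The plan is to recognise the iteration as a Picard scheme for the fixed-point operator $T\colon W^{1,p}_g(\Omega)\to W^{1,p}_g(\Omega)$ that sends $v$ to the unique weak solution $T(v)$ of $\Div b(x,\D T(v)) = \Div(b(x,\D v) - \gamma\, a(x,\D v) + \gamma f)$, with $\gamma>0$ chosen as in Lemma~\ref{lem:normEstimate}. Solvability of this defining equation is precisely the reference-field hypothesis on $b$, and Banach's fixed point theorem will deliver existence, uniqueness and $W^{1,p}$-convergence of $u_n = T^n(u_0)$ as soon as $T$ is a strict contraction in a complete metric equivalent to the $W^{1,p}$ topology on $W^{1,p}_g(\Omega)$; the natural choice is $d(v_1,v_2)=\|V_\mu(\D v_1)-V_\mu(\D v_2)\|_{L^2(\Omega)}$.

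To prove contraction, fix $v_1, v_2 \in W^{1,p}_g(\Omega)$, set $w_i = T(v_i)$, subtract the defining equations for $w_1$ and $w_2$ and test with $w_1-w_2 \in W^{1,p}_0(\Omega)$. The fundamental theorem of calculus along the segments $[\D v_1, \D v_2]$ and $[\D w_1, \D w_2]$ converts the resulting identity into
\[
\int_\Omega M_b(\D w_2,\D w_1)\,\D(w_1-w_2)\cdot\D(w_1-w_2)\d x = \int_\Omega (M_b-\gamma M_a)(\D v_2,\D v_1)\,\D(v_1-v_2)\cdot\D(w_1-w_2)\d x,
\]
where $M_b(z_1,z_2)(x)=\int_0^1\p_z b(x,z_1+t(z_2-z_1))\d t$ and $M_a$ is defined analogously. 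Lemma~\ref{lem:normEstimate}, applied pointwise in $x$ and $t$ and then integrated, yields $\|(M_b-\gamma M_a)(z_1,z_2)\|\leq K_{a,b}\Lambda_b\int_0^1(\mu^2+|z_1+t(z_2-z_1)|^2)^{(p-2)/2}\d t$, while assumption \ref{ass:A2} gives a matching pointwise lower bound on $M_b\xi\cdot\xi$.

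Now invoke the $V$-functional machinery: Lemma~\ref{lem:VFuncEquiv} collapses the $t$-integrals into multiples of $(\mu^2+|z_1|^2+|z_2-z_1|^2)^{(p-2)/2}$, and Lemma~\ref{lem:triangleEstimate} rewrites these in the symmetric form $(\mu^2+|z_1|^2+|z_2|^2)^{(p-2)/2}$. The left-hand side is then bounded below by $c_1\,\|V_\mu(\D w_1)-V_\mu(\D w_2)\|_{L^2(\Omega)}^2$. A Cauchy--Schwarz split of the right-hand side combined with a Hölder-type interpolation (needed to mediate between the two different $V$-functional weights appearing) produces an upper bound
\[
c_2\, K_{a,b}\Lambda_b\,\|V_\mu(\D v_1)-V_\mu(\D v_2)\|_{L^2(\Omega)}\,\|V_\mu(\D w_1)-V_\mu(\D w_2)\|_{L^2(\Omega)},
\]
and dividing through yields $d(w_1,w_2)\leq \kappa\, d(v_1,v_2)$ with $\kappa=(c_2/c_1)\, K_{a,b}\,(\Lambda_b/\lambda_b)$ equal to the left-hand side of the smallness hypothesis.

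The main obstacle is precisely the interpolation in the previous step: the ellipticity-derived weight on the left lives naturally at $(\D w_1,\D w_2)$, while the right-hand side carries a weight at $(\D v_1,\D v_2)$, and the two cannot be compared pointwise. For $p=2$ all weights are trivial and the argument collapses to the linear algebra of \cite{Koshelev1991}; for $p\neq 2$ one must interpolate between the weighted Cauchy--Schwarz bound and an unweighted $L^p$ bound (the latter discarding the weight at a cost tracked by Lemma~\ref{lem:VFuncEquiv}), which is the source of the unusual exponent $(p-2)/(2p)$. The splitting into the cases $p\geq 2$ and $p\leq 2$ reflects the fact that the direction of monotonicity of $x\mapsto x^{(p-2)/2}$, and hence the direction of inequality in Lemma~\ref{lem:VFuncEquiv}, reverses at $p=2$. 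Once contraction is established, uniqueness of solutions of \eqref{eq:KoshProblem} is immediate: any two solutions $u,u'$ are fixed points of $T$, so the contraction estimate forces $d(u,u')\leq\kappa\, d(u,u')$ with $\kappa<1$, whence $u=u'$.
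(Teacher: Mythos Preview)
Your overall framing — test the difference of the defining equations with $w_1-w_2$, apply the mean-value theorem, bound the left side below via \ref{ass:A2} and the right side above via Lemma~\ref{lem:normEstimate} — is exactly the paper's strategy. The gap is in how you propose to close the estimate.

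The difficulty you correctly flag, that the left-hand weight lives at $(\D w_1,\D w_2)$ while the right-hand weight lives at $(\D v_1,\D v_2)$, is not resolved by a Cauchy--Schwarz split: that leaves a factor $\big(\int_\Omega(\mu^2+|\D v_1|^2+|\D v_2|^2)^{(p-2)/2}|\D(w_1-w_2)|^2\d x\big)^{1/2}$ which is \emph{not} controlled by $d(w_1,w_2)$, since $|\D v_i|$ and $|\D w_j|$ are pointwise unrelated. Your proposed ``H\"older-type interpolation'' between this and an unweighted $L^p$ bound is too vague to yield a linear contraction, let alone the explicit constants. The paper instead applies the Young-type Lemma~\ref{lem:VYoung} with a free parameter $\varepsilon$: this produces a ``mixed'' term $\int_\Omega(\mu^2+|\D u_n|^2+|\D(u_{n+1}-u_n)|^2)^{(p-2)/2}|\D(u_{n+1}-u_n)|^2\d x$ in which only one of the two arguments differs from the left-hand weight, and this is precisely what Lemma~\ref{lem:triangleEstimate} can absorb. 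Crucially, this works because the paper argues along the specific sequence $(u_{n-1},u_n,u_{n+1})$, so $u_n$ is \emph{shared} between the consecutive pairs; your abstract Banach fixed-point formulation for arbitrary $v_1,v_2$ loses this shared argument and the absorption step fails. Optimising in $\varepsilon$ then gives the displayed constants.

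A secondary point: for $p<2$ the contracted quantity $\int_\Omega(\mu^2+|\D u_n|^2+|\D u_{n+1}|^2)^{(p-2)/2}|\D(u_{n+1}-u_n)|^2\d x$ does not by itself control $\|\D(u_{n+1}-u_n)\|_{L^p}$ without a uniform bound on $\|\D u_n\|_{L^p}$. The paper handles this with a separate boundedness lemma (Lemma~\ref{cor:boundedness}), which you do not address; had your $V$-metric contraction gone through it would indeed deliver boundedness for free, but since it does not, this step is also missing.
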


\begin{remark}
From the proof it will be clear that, in fact, $\{u_n\}$ generated via \eqref{eq:KoshIter} converges to the weak solution of \eqref{eq:KoshProblem} whenever $\gamma>0$ is such that 
\begin{align*}
\begin{cases}
R^{p-1} =\dfrac{\Lambda_b}{\lambda_b}K_\gamma \left(\dfrac{3+\sqrt{5}}{2}\right)^\frac{p-2}{2p} 6^{p-2}(p-1)<1 &\quad\text{ if } p\geq 2 \\
R^{p/2}=\dfrac{\Lambda_b}{\lambda_b}K_\gamma \left(\dfrac{3-\sqrt{5}}{2}\right)^\frac{(p-2)}{4}\dfrac{2^{2-p}}{p-1}< 1 &\quad \text{ if } p\leq 2.
\end{cases}
\end{align*}
Here $K_\gamma$ is the constant from \eqref{def:Kg}.
Further, the convergence occurs at linear rate $R$. To be precise, the following estimates hold:
\begin{align*}
\|u_{n+1}-u_n\|_{W^{1,p}(\Omega)}\leq& R^n \|u_1-u_0\|_{W^{1,p}(\Omega)}\\
\|u_{n+1}-u\|_{W^{1,p}(\Omega)}\leq& R^n \|u_1-u\|_{W^{1,p}(\Omega)},
\end{align*}
where $u$ is the weak solution of \eqref{eq:KoshProblem}.
\end{remark}

Before presenting the proof we want to briefly outline the main idea. Consider the sequence $(u_n)$ generated via \eqref{eq:KoshIter}. Subtracting the equations defining $u_{n+1}$ and $u_n$ we find
\begin{align*}
\Div\left( b(x,\D u_{n+1})-b(x,\D u_n)\right)=\Div \left(b(x,\D u_n)-b(x,\D u_{n-1})-\gamma\left(a(x,\D u_n)-a(x,\D u_{n-1})\right)\right)
\end{align*}
Using the mean-value theorem and denoting $A(x,z)=\p_z a(x,z)$, $B(x,z)=\p_z b(x,z)$, we may rewrite this as
\begin{align*}
\Div \left(B(x,\xi_{n+1})\D(u_{n+1}-u_n)\right) = \Div \left((B-\gamma A)(x,\tilde\xi_n)\D(u_n-u_{n-1})\right).
\end{align*}
for some $\xi_{n+1}$ lying on the line segment between $\D u_{n+1}$ and $\D u_n$ and $\tilde \xi_n$ lying on the line segment between $\D u_n$ and $\D u_{n-1}$. If $\|B(x,z)-\gamma A(x,z)\|$ is sufficiently small, uniformly in $x$ and $z$, then applying the ellipticity assumption to bound the left-hand side from below, we are able to show that for some $R<1$,
$$
\|\D(u_{n+1}-u_n)\|_{L^p(\Omega)}\leq R\|\D(u_n-u_{n-1})\|_{L^p(\Omega)}.
$$
We then conclude easily.
\begin{proof}
Let $\gamma$ be as in Lemma \ref{lem:normEstimate}, so that, using the notation of the lemma, $K_\gamma = K_{a,b}$.
Throughout the proof we write $A(x,z)= \p_z a(x,z)$ and $B(x,z)=\p_z b(x,z)$.
 
 Let $n\geq 1$.
Test the equations defining $u_{n+1}$ and $u_n$ against $u_{n+1}-u_n$ to find:
 \begin{align*}
  &\int_\Omega \left(b(x,\D u_{n+1})-b(x,\D u_n)\right)\D (u_{n+1}-u_n)\d x\\
  =&\int_\Omega \left(b(x,\D u_n)-b(x,\D u_{n-1})\right)\D (u_{n+1}-u_n)-\gamma \left(a(x,\D u_n)-a(x,\D u_{n-1})\right)\D (u_{n+1}-u_n)\d x\\
  =&\int_\Omega \int_0^1 \left(B(x,z_n(\theta))-\gamma A(x,z_n(\theta))\right)\D (u_n-u_{n-1})\cdot \D (u_{n+1}-u_n)\ d\theta\d x.
 \end{align*}
 Here $z_n(\theta) = (1-\theta) \D u_{n-1}+ \theta \D u_n$. The last line follows from an application of the mean value theorem. Now proceed to estimate both sides of this equality. 
 
 We focus first on the case $p\geq 2$.
Using again the mean-value theorem and writing $z_{n+1}(\theta) = (1-\theta) \D u_{n+1}+ \theta \D u_n$,  by assumption \ref{ass:A2}, the left-hand side gives:
  \begin{align*}
   &\int_\Omega \left(b(x,\D u_{n+1})-b(x,\D u_n)\right)\D (u_{n+1}-u_n)\ d x\\
  =& \int_\Omega \int_0^1 (B(x,z_{n+1}(\theta))\D (u_{n+1}-u_n)\cdot \D (u_{n+1}-u_n)\d \theta\d x\\
   \geq& \lambda_b \int_\Omega\int_0^1 (\mu^2+|z_{n+1}(\theta)|^2)^\frac{p-2}{2}|\D (u_{n+1}-u_n)|^2\d \theta\d x\\
   \geq&   \frac{\lambda_b}{6^\frac{p-2}{2}(p-1)} \int_\Omega (\mu^2+|\D u_{n+1}|^2+|\D u_{n+1}-\D u_n|^2)^\frac{p-2}{2}|\D (u_{n+1}-u_n)|^2\d x
  \end{align*}
  where the last line uses Lemma \ref{lem:VFuncEquiv}.
  
  Note that $\|B(x,w)\|\leq\Lambda_b(\mu^2+|w|^2)^\frac{p-2}{2}$ by \ref{ass:A2}. Hence 
  on the right-hand side by Lemma \ref{lem:normEstimate} and Lemma \ref{lem:triangleEstimate},
  \begin{align*}
   &\int_\Omega \int_0^1 \left(B(x,z_n(\theta))-\gamma A(x,z_n(\theta))\right)\D (u_n-u_{n-1})\cdot \D (u_{n+1}-u_n)\d \theta\d x\\
   \leq& \Lambda_b K\int_\Omega \int_0^1 (\mu^2+|z_n(\theta)|^2)^\frac{p-2}{2} |\D u_n-\D u_{n-1}||\D u_{n+1}-\D u_n|\d\theta\d x\\
   \leq& \Lambda_b K \int_\Omega (\mu^2+|\D u_n|^2+|\D u_n-\D u_{n-1}|^2)^\frac{p-2}{2}|\D (u_n-u_{n-1})||\D (u_{n+1}-u_n)|\d x =I.
  \end{align*}
  We now apply Lemma \ref{lem:VYoung} and Lemma \ref{lem:VFuncEquiv} to find for any $\e>0$,
  \begin{align*}
 I    \leq& \Lambda_b K \left(C_\e\int_\Omega V_{(\mu^2+|\D u_n|^2)^{1/2}}(\D (u_{n+1}-u_{n}))^2+\e \int_\Omega V_{(\mu^2+|\D u_n|^2)^{1/2}}(\D (u_n-u_{n-1}))^2\right)\\
 \leq& \Lambda_b K \Big(\e\int_\Omega V_{(\mu^2+|\D u_n|^2)^{1/2}}(\D (u_n-u_{n-1}))^2\\
 &+C_\e \left(\frac{3+\sqrt{5}}{2}\right)^\frac{p-2}{2}\int_\Omega (\mu^2+|\D u_{n+1}|^2+|\D u_{n+1}-\D u_n|^2)^\frac{p-2}{2}|\D u_{n+1}-\D u_n|^2\Big).
  \end{align*}

  Combining these two estimates and re-arranging gives:
  \begin{align*}
   &\int_\Omega (\mu^2+|\D u_{n+1}|^2+|\D u_{n+1}-\D u_n|^2)^\frac{p-2}{2}|\D (u_{n+1}-u_n)|^2\d x\\
   \leq& \frac{\e C_1}{C_0-C_\e C_2} \int_\Omega (\mu^2+|\D u_n|^2+|\D (u_{n-1}-u_n)|^2)^\frac{p-2}{2}|\D (u_{n+1}-u_n)|^2\d x.
  \end{align*}
  where $C_0 = \frac{\lambda_b}{6^\frac{p-2}{2}(p-1)}$, $C_1 = \Lambda_b K$ and $C_2 = (\frac{3+\sqrt{5}}{2})^\frac{p-2}{2}\Lambda_b K$.

  Optimising in $\varepsilon$ and using the hypothesis this shows by induction that
  \[
   \int_\Omega (|\D u_n|^2+|\D (u_{n+1}-u_n)|^2)^\frac{p-2}{2}|\D (u_{n+1}-u_n)|^2\d x\to0
  \]
  at linear rate.

  As $p\geq2$, it immediately follows that $\{u_n\}$ converges in $W^{1,p}(\Omega)$ at linear rate. Necessarily the limit $u$ lies in $W^{1,p}_g(\Omega)$ and it is a solution of problem \eqref{eq:KoshProblem}. Moreover considering any solution $v\in W^{1,p}_g(\Omega)$ of \eqref{eq:KoshProblem} and considering the above estimates with the starting point
\begin{align*}
&\int_\Omega (b(x,\D u_{n+1})-b(x,\D v))\D (u_{n+1}-v) \d x\\
=& \int_\Omega (b(x,\D u_n)-b(x,\D v)
-\gamma (a(x,\D u_n)-a(x,\D v))+\gamma f)\cdot \D (u_{n+1}-v)\d x
\end{align*}  
we find that, for any  $u_0\in W^{1,p}_g(\Omega)$, the corresponding iterative process converges to $v$ in $W^{1,p}_g(\Omega)$. In particular, the solution to \eqref{eq:KoshProblem} is unique.
  
We now consider $p<2$. Proceed as before to obtain
  \begin{align*}
   &\int_\Omega \left(b(x,\D u_{n+1})-b(x,\D u_n)\right)\D (u_{n+1}-u_n)\d x\\
   \geq&   \lambda_b \int_\Omega (\mu^2+|\D u_{n+1}|^2+|\D u_{n+1}-\D u_n|^2)^\frac{p-2}{2}|\D (u_{n+1}-u_n)|^2\d x
   \end{align*}
   and
    \begin{align*}
       &\int_\Omega \int_0^1 \left(B(x,z(\theta))-\gamma A(x,z(\theta))\right)\D (u_n-u_{n-1})\cdot \D (u_{n+1}-u_n)\d\theta\d x\\
   \leq& \frac{\Lambda_b K}{2^{p-2}(p-1)} \Bigg(\left(\frac{3-\sqrt{5}}{2}\right)^\frac{p-2}{2}C_\e\int_\Omega (\mu^2+|\D u_{n+1}|^2+|\D u_{n+1}-\D u_n|^2)^\frac{p-2}{2}|\D (u_n-u_{n-1})|^2\d x \\
 &+ \e \int_\Omega (\mu^2+|\D u_n|^2+|\D u_n-\D u_{n-1}|^2)^\frac{p-2}{2}|\D (u_n-u_{n-1})|^2\d x\Bigg).
  \end{align*}
 Combining these estimates and our assumptions, we again find that   \[
   \int_\Omega (|\D u_n|^2+|\D u_{n+1}|^2)^\frac{p-2}{2}|\D (u_{n+1}-u_n)|^2\d x\to0
  \]
  at linear rate.

  Note that by H\"older there is $c>0$ such that
  \begin{align*}
   &\int_\Omega (|\D u_n|+|\D (u_{n+1}-u_n)|)^{p-2}|\D (u_{n+1}-u_n)|^2\d x\\
   \geq& c \frac{\|\D (u_{n+1}-u_n)\|_p^2}{\|\D u_n\|_p^{2-p}+\|\D u_{n+1}\|_p^{2-p}}.
  \end{align*}
  If $\{\D u_n\}$ is a bounded sequence in $W^{1,p}$, convergence of $\{u_n\}$ in $W^{1,p}(\Omega)$ and existence and uniqueness of solutions to \eqref{eq:KoshProblem} follow by repeating the arguments of the case $p\geq 2$. For clarity of presentation and as it essentially follows by repeating arguments similar to those of this proof we postpone the proof of boundedness to Lemma \ref{cor:boundedness}.
  \end{proof}

\begin{lemma}\label{cor:boundedness}
Suppose the assumptions of Theorem \ref{thm:KoshelevBasic} hold. Then $\{u_n\}$ is bounded in $W^{1,p}(\Omega)$.
\end{lemma}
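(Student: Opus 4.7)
The plan is to reduce at once to the case $p<2$, since for $p\geq 2$ the proof of Theorem \ref{thm:KoshelevBasic} already yields $W^{1,p}$-convergence, hence boundedness, of $\{u_n\}$. The central input will be the following by-product of the proof of Theorem \ref{thm:KoshelevBasic}: the ellipticity bound on the left-hand side and the combination of Lemma \ref{lem:VFuncEquiv} with Young's inequality on the right-hand side, derived there in the range $p<2$, make no a priori use of a $W^{1,p}$-control on the iterates. The resulting geometric decay
\[
E_n:=\int_\Omega (\mu^2+|\D u_n|^2+|\D(u_n-u_{n-1})|^2)^{(p-2)/2}|\D(u_n-u_{n-1})|^2\d x\leq \rho^{n-1}E_1,\qquad \rho<1,
\]
therefore holds unconditionally, and since for $p<2$ the weight is pointwise dominated by $|\D(u_1-u_0)|^{p-2}$, the initial value is finite: $E_1\leq \|\D(u_1-u_0)\|_p^p<\infty$.

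The main obstacle is that, for $p<2$, the weight $w_n=(\mu^2+|\D u_n|^2+|\D(u_n-u_{n-1})|^2)^{(p-2)/2}$ decreases in its argument, so $E_n$ does not directly control $Y_n:=\|\D(u_n-u_{n-1})\|_p$. I will convert the decay of $E_n$ into a decay of $Y_n$ through H\"older's inequality with dual exponents $(2/p,2/(2-p))$ applied to the factorisation $|\D(u_n-u_{n-1})|^p=(w_n|\D(u_n-u_{n-1})|^2)^{p/2}w_n^{-p/2}$. Noting that $w_n^{-p/(2-p)}=(\mu^2+|\D u_n|^2+|\D(u_n-u_{n-1})|^2)^{p/2}$ and using sub-additivity of $t\mapsto t^{p/2}$, this will yield, with $X_n:=\|\D u_n\|_p$,
\[
Y_n^p\lesssim E_n^{p/2}\bigl(1+X_n^p+Y_n^p\bigr)^{(2-p)/2}.
\]

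The hard part is to decouple this implicit inequality. I plan to split according to whether $Y_n^p$ exceeds $1+X_n^p$; in the first case the parenthesis simplifies to a multiple of $(1+X_n^p)^{(2-p)/2}$, while in the second it simplifies to a multiple of $Y_n^{p(2-p)/2}$, which can then be absorbed into $Y_n$. Combined with the triangle bound $X_n\leq X_{n-1}+Y_n$, the sub-additivity $(X_{n-1}+Y_n)^{(2-p)/2}\leq X_{n-1}^{(2-p)/2}+Y_n^{(2-p)/2}$ (valid since $(2-p)/2\in(0,1)$), and Young's inequality with exponents $(2/(2-p),2/p)$ to absorb the arising $Y_n^{(2-p)/2}$ term on the right-hand side, this will give
\[
Y_n\leq c E_n^{1/2}\bigl(1+X_{n-1}^{(2-p)/2}\bigr)+c E_n^{1/p}.
\]

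Finally, from the elementary estimate $X_{n-1}^{(2-p)/2}\leq 1+X_{n-1}$ and $X_n\leq X_{n-1}+Y_n$, I will arrive at a linear recursion
\[
X_n\leq (1+a_n)X_{n-1}+b_n,\qquad a_n:=cE_n^{1/2},\quad b_n:=c\bigl(E_n^{1/2}+E_n^{1/p}\bigr),
\]
in which $\sum_n a_n$ and $\sum_n b_n$ are both convergent by the geometric decay of $E_n$. A discrete Gronwall iteration will then give the uniform bound $\sup_n X_n\leq \exp\bigl(\sum_n a_n\bigr)\bigl(X_0+\sum_n b_n\bigr)<\infty$, which is the required $W^{1,p}$-boundedness.
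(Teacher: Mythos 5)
Your proposal is correct, but it takes a genuinely different route from the paper. The paper proves the lemma in a free-standing way: it tests the equation defining $u_{n+1}$ against $u_{n+1}$ itself, applies the mean value theorem between $\D u_n$ and $0$, and reuses the smallness mechanism $\|B-\gamma A\|\leq K\|B\|$ together with Lemmas \ref{lem:VFuncEquiv} and \ref{lem:VYoung} to arrive at a recursion of the form $\|V_\mu(\D u_{n+1})\|_{L^2(\Omega)}^2\leq \eta\,\|V_\mu(\D u_n)\|_{L^2(\Omega)}^2+C$ with $\eta<1$, from which boundedness follows by induction, uniformly in $p$ (the same argument is recycled later, in the proof of Theorem \ref{thm:numericalConvergence}, to bound $\|u_n\|_{W^{1+\alpha,p}}$ in terms of $\|f\|_{L^{p'}}$). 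You instead dispose of $p\geq 2$ by citing the already-established $W^{1,p}$-convergence and, for $p<2$, bootstrap boundedness out of the geometric decay of the weighted increments $E_n$, which is indeed derived in the proof of Theorem \ref{thm:KoshelevBasic} before (and independently of) any boundedness statement, so your argument is not circular; the subsequent steps — H\"older with exponents $(2/p,2/(2-p))$, the case split to decouple $Y_n$, the Young absorption of $Y_n^{(2-p)/2}$, and the discrete Gronwall estimate $\sup_n X_n\leq e^{\sum a_n}(X_0+\sum b_n)$ — are all sound (note only that your constant absorbs $\mu^p|\Omega|$ from the term $\int_\Omega w_n^{-p/(2-p)}\d x$). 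What each approach buys: yours avoids a second testing argument and shows that the contraction of increments alone already forces boundedness, which is a nice structural observation; the paper's version is self-contained (it does not lean on the internal estimates of the theorem whose proof invokes it, so the overall logic is more robust if those difference estimates were ever modified), treats all $p$ by one mechanism, and yields a bound quantitatively expressed through the data via $V_\mu$, which is what the later sections actually reuse.
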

\begin{proof}

Let $\gamma$ be as in Lemma \ref{lem:normEstimate}, so that, using the notation of the lemma, $K_\gamma = K_{a,b}$. We write $A(x,z)=\p_z a(x,z)$ and $B(x,z)=\p_z(x,z)$.
 
 Let $n\geq 1$.
Test the equation defining $u_{n+1}$ against $u_{n+1}$ to find:
 \begin{align*}
  &\int_\Omega \left(b(x,\D u_{n+1})-b(x,0)\right)\D u_{n+1}\d x\\
  =&\int_\Omega \left(b(x,\D u_n)-b(x,0)\right)\D u_{n+1}\\
  &-\gamma \left(a(x,\D u_n)-a(x,0)\right)\D u_{n+1}+\gamma a(x,0)\D u_{n+1}\ d x\\
  =&\int_\Omega \int_0^1 \left(B(x,z_n(\theta))-\gamma A(x,z_n(\theta))\right)\D u_n\cdot \D u_{n+1}+\gamma a(x,0)\D u_{n+1}\d \theta\d x.
 \end{align*}
 Here $z_n(\theta) = \theta u_n$. The last line follows from an application of the mean value theorem. Now proceed to estimate both sides of this equality exactly as in Theorem \ref{thm:KoshelevBasic}.
 
 We focus first on the case $p\geq 2$.
Using again the mean-value theorem and writing $z_{n+1}(\theta) = \theta u_{n+1}$,  by assumption \ref{ass:A2}, the left-hand side gives:
  \begin{align*}
   \int_\Omega \left(b_\alpha(x,\D u_{n+1})-b_\alpha(x,0)\right)\D u_{n+1}\d x
  =& \int_\Omega \int_0^1 (B(x,z_{n+1}(\theta))\D (u_{n+1})\cdot \D u_{n+1}\d \theta\d x\\
   \geq& \lambda_b \int_\Omega\int_0^1 (\mu^2+|z_{n_1}(\theta)|^2)^\frac{p-2}{2}|\D u_{n+1}|^2\d \theta\d x\\
   \geq&   \frac{\lambda_b}{(p-1)6^\frac{p-2}{2}}\int_\Omega (\mu^2+|\D u_{n+1}|^2)^\frac{p-2}{2}|\D u_{n+1}|^2\d x
  \end{align*}
  where the last line uses Lemma \ref{lem:VFuncEquiv}.
  
  Note that $\|B(x,z)\|\leq\Lambda_b(\mu^2+|z|^2)^\frac{p-2}{2}$ by \ref{ass:A2}. Hence 
  on the right-hand side by Lemma \ref{lem:normEstimate} and Lemma \ref{lem:VFuncEquiv}, as well as \ref{ass:A3}
  \begin{align*}
   &\int_\Omega \int_0^1 \left(B(x,z_n(\theta))-\gamma A(x,z_n(\theta))\right)\D u_n\cdot \D u_{n+1}+\gamma a(x,0)\cdot \D u_{n+1}\d \theta\d x\\
   \leq& \Lambda_b K\int_\Omega \int_0^1 (\mu^2+|z_n(\theta)|^2)^\frac{p-2}{2} |\D u_n||\D u_{n+1}|+C\gamma |\D u_{n+1}|\mathrm{d\theta}\d x\\
   \leq& \Lambda_b K \int_\Omega (\mu^2+|\D u_n|^2)^\frac{p-2}{2}|\D u_n||\D (u_{n+1})|\d x + C\gamma \int_\Omega |\D u_{n+1}|\d x = I + II.
  \end{align*}
  We now apply Lemma \ref{lem:VYoung}, to find
  \begin{align*}
 I    \leq& \Lambda_b K \left(C_\e\int_\Omega V_{\mu}(\D u_{n+1})^2+\e \int_\Omega V_{\mu}(\D u_n)^2\right).
  \end{align*}
  Young's inequality also gives
  \begin{align*}
  II\leq& C\gamma\|\D u_{n+1}\|_{L^p(\Omega)}\leq \e_1\|\D u_{n+1}\|_{L^p(\Omega)}^p + C(\e_1).
  \end{align*}

  Combining these two estimates and re-arranging gives:
  \begin{align*}
   &\int_\Omega (\mu^2+|\D u_{n+1}|^2)^\frac{p-2}{2}|\D u_{n+1}|^2\d x\\
   \leq& \frac{\e C_1}{C_0-C_\e C_2} \int_\Omega (\mu^2+|\D u_n|^2)^\frac{p-2}{2}|\D u_n)|^2\d x+\frac{1}{C_0-\e C_2}(C(\e_1)+\e_1\|\D u_{n+1}\|_{L^p(\Omega)}^p).
  \end{align*}
  where $C_0 = \frac{\lambda_b}{6^\frac{p-2}{2}(p-1)}$ and $C_1 = C_2 = \Lambda_b K$.

  Optimising in $\varepsilon$, we can ensure that
  $\frac{\e C_1}{C_0-C_\e C_1}<1$. Now choosing $\e_1$ sufficiently small, we conclude that
  \begin{align}\label{eq:boundedFinal}
  \|V_\mu(\D u_{n+1})\|_{L^2(\Omega)}^2\leq \eta\|V_\mu(\D u_{n+1})\|_{L^2(\Omega)}^2 + C
  \end{align}
  for some constant $\eta<1$. The conclusion follows using induction.

For $p\leq 2$ we argue similarly to obtain \eqref{eq:boundedFinal}
with the choices $C_0 = \lambda_b$, $C_1 = \Lambda_b K$, $C_2 = \Lambda_b K (p-1)^{-1}4^{(2-p)2}$. Optimising in $\e$ we can again ensure that $$\dfrac{\e C_1}{C_0-C_\e C_1}<1$$ and use this to conclude as before.
\end{proof}

\section{Numerical analysis and experiments}\label{sec:numerical}

Throughout this section we assume that $a(x,z), b(x,z)$ satisfy $\mathrm{\ref{ass:A1}-\ref{ass:A3}}$ and \eqref{ass:A4}. We further assume that the assumptions of Theorem \ref{thm:KoshelevBasic} hold. In particular, this will imply that, with an appropriate choice of $\gamma$, the iterative process \eqref{eq:KoshIter} converges in $W^{1,p}(\Omega)$ to a solution of \eqref{eq:KoshProblem} and so, in particular, the iterates $\{u_n\}$ are uniformly bounded in $W^{1,p}(\Omega)$. We fix such a choice of $\gamma$ from now on. 

Recall that we study the following numerical scheme:
Suppose $\Omega$ is a regular polytope. Consider a sequence of shape-regular triangulations $\{T_h\}_{h\in(0,1]}$ of $\Omega$ of mesh-size $h$. Denote by $X_h$ the space of continuous piecewise linear (subordinate to $T_h$) functions. 
Choose $u_0\in X_h$. Define $u_{n+1}^h\in X_h$ inductively as a solution of the problem:
\begin{align}\label{eq:numScheme}
\begin{cases}
\int_\Omega b(x,\D u_{n+1}^h)\cdot \D \phi\d x = \int_\Omega (b(x,\D u_n^h)-\gamma a(x,\D u_n^h)+\gamma f)\cdot \D \phi\d x &\quad \forall \phi\in X_h. \\
u_{n+1} = 0 &\quad\text{ on } \p\Omega
\end{cases}
\end{align}
Note that due to $\mathrm{\ref{ass:A1}-\ref{ass:A3}}$ and the theory of monotone operators \eqref{eq:numScheme} is well-defined.

\subsection{Analysis of the numerical scheme}
In this section we present our main results regarding convergence, a-priori and a-posteriori estimates for \eqref{eq:numScheme}. We begin with the following convergence result:
\begin{theorem}\label{thm:KoshelevNum}
Let $u_0\in X_h$ and consider the sequence $\{u_n^h\}$ generated by \eqref{eq:numScheme} starting from $u_0$. Then $u_n^h\to u_h$ in $W^{1,p}(\Omega)$ where $u_h\in X_h$ solves
\begin{align}\label{eq:numProblem}
\begin{cases}
\int_\Omega a(x,\D u_h)\cdot \D \phi\d x = \int_\Omega f\cdot \D \phi\d x &\quad\forall \phi\in X_h\\
u_h = 0 &\quad\text{ on } \p\Omega.
\end{cases}
\end{align}
Moreover there is $0<C_1<1$ such that
\begin{align*}
\|u_{n+1}^h-u_n^h\|_{W^{1,p}(\Omega)}\leq C_1^n\|u_1-u_0\|_{W^{1,p}(\Omega)}
\end{align*}
\end{theorem}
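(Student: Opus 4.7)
The plan is to show that the proof of Theorem \ref{thm:KoshelevBasic} transfers essentially verbatim to the discrete setting \eqref{eq:numScheme}, because $X_h$ is a finite-dimensional linear subspace of $W^{1,p}_0(\Omega)$ and therefore differences $u_{n+1}^h-u_n^h$ are themselves admissible test functions in the discrete weak formulation. First I would verify well-posedness of each step of the iteration: under assumptions \ref{ass:A1}--\ref{ass:A3} on $b$, the map $v\mapsto \int_\Omega b(x,\D v)\cdot \D\phi\,dx$ restricted to $X_h$ is strictly monotone, continuous and coercive on the finite-dimensional space $X_h$, while the right-hand side defines a bounded linear functional on $X_h$ thanks to \ref{ass:A3} and $f\in L^{p'}(\Omega)$. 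Hence $u_{n+1}^h\in X_h$ exists and is unique at each step.

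The key step is to subtract the defining equations for $u_{n+1}^h$ and $u_n^h$ and test against $\phi=u_{n+1}^h-u_n^h\in X_h$, obtaining
\begin{align*}
&\int_\Omega\bigl(b(x,\D u_{n+1}^h)-b(x,\D u_n^h)\bigr)\cdot \D(u_{n+1}^h-u_n^h)\,dx\\
&\quad=\int_\Omega\int_0^1\bigl(B(x,z_n(\theta))-\gamma A(x,z_n(\theta))\bigr)\D(u_n^h-u_{n-1}^h)\cdot \D(u_{n+1}^h-u_n^h)\,d\theta\,dx,
\end{align*}
with $z_n(\theta)=(1-\theta)\D u_{n-1}^h+\theta \D u_n^h$, exactly as in the continuous proof. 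From here, I would invoke Lemma \ref{lem:normEstimate} together with Lemmas \ref{lem:VFuncEquiv}, \ref{lem:VYoung} and \ref{lem:triangleEstimate} to bound the right-hand side, and use \ref{ass:A2} together with Lemma \ref{lem:VFuncEquiv} to bound the left-hand side from below, in both cases $p\geq 2$ and $p\leq 2$ separately. The smallness assumption inherited from Theorem \ref{thm:KoshelevBasic} then yields the contraction
\begin{align*}
\|u_{n+1}^h-u_n^h\|_{W^{1,p}(\Omega)}\leq C_1\|u_n^h-u_{n-1}^h\|_{W^{1,p}(\Omega)}
\end{align*}
for some $C_1\in(0,1)$, and iterating gives the stated geometric decay.

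For the case $p<2$ I would first establish a uniform $W^{1,p}$-bound on the sequence $\{u_n^h\}$ by reproducing the argument of Lemma \ref{cor:boundedness}, testing \eqref{eq:numScheme} against $u_{n+1}^h\in X_h$; this is legal for exactly the same subspace reason as above. Once boundedness is in hand, the H\"older-type interpolation used at the end of the proof of Theorem \ref{thm:KoshelevBasic} converts the contractive decay of $\int_\Omega (|\D u_n^h|^2+|\D u_{n+1}^h|^2)^{(p-2)/2}|\D(u_{n+1}^h-u_n^h)|^2\,dx$ into a genuine Cauchy estimate in $W^{1,p}(\Omega)$. In either parity of $p$, the sequence is Cauchy in $W^{1,p}(\Omega)$, hence converges to some $u_h\in W^{1,p}_0(\Omega)$; since $X_h$ is closed in $W^{1,p}(\Omega)$, $u_h\in X_h$.

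Finally, to identify the limit as a solution of \eqref{eq:numProblem}, I would pass to the limit in \eqref{eq:numScheme}. Strong $W^{1,p}$ convergence of $u_n^h$ combined with continuity of $a(x,\cdot)$, $b(x,\cdot)$ and the growth bound \ref{ass:A3} allow the use of dominated convergence (applied along a pointwise a.e.\ convergent subsequence of gradients and invoking the uniform $L^{p'}$-bound on $a(x,\D u_n^h)$ and $b(x,\D u_n^h)$) to send $n\to\infty$ in the weak formulation. The $b$-terms on both sides then cancel, leaving precisely \eqref{eq:numProblem}. I expect no genuine obstacle beyond bookkeeping: the only subtlety is the case $p<2$, where the boundedness step must be carried out before the Cauchy argument closes, exactly as in the passage from Theorem \ref{thm:KoshelevBasic} to Lemma \ref{cor:boundedness}.
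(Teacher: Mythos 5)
Your proposal is correct and follows essentially the same route as the paper, which proves Theorem \ref{thm:KoshelevNum} simply by carrying out the proof of Theorem \ref{thm:KoshelevBasic} line by line with $\{u_n^h\}$ in place of $\{u_n\}$ and $u_h$ in place of $u$; the discrete admissibility of test functions, the well-posedness via monotone operators, and the boundedness step for $p<2$ that you spell out are exactly the points that make this transfer work.
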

\begin{proof}
This follows from carrying out line by line the proof of Theorem \ref{thm:KoshelevBasic}, replacing $\{u_n\}$ with $\{u_n^h\}$ and $u$ with $u_h$ respectively.
\end{proof}

We now wish to show that $u_h\to u$ in $W^{1,p}(\Omega)$ as $h\to 0$ where $u\in W^{1,p}_0(\Omega)$ solves \eqref{eq:KoshProblem}. In order to prove this we make the following regularity assumption on $b$:
\textit{If $F\in L^{p'}(\Omega)$ then there is $\alpha>0$ such that \eqref{eq:goodOp} has a solution $u$ in $W^{1+\alpha,p}(\Omega)$ and moreover we have the following estimate for some $c>0$:}
\begin{align}\label{eq:addRegularity}
\|u\|_{W^{1+\alpha,p}(\Omega)}\leq c\left(1+\|u\|_{W^{1,p}(\Omega)}+\|F\|_{L^{p'}(\Omega)}^{1/(p-1)}\right)
\end{align}

\begin{remark}
\eqref{eq:addRegularity} is satisfied for example when $b(x,z)=|z|^{p-2}z$ with any choice of $\alpha>0$ such that ${\alpha < \min\left(1/(p-1)^2,(p-1)^2\right)}$, see \cite{Simon1981}.
\end{remark}

We proceed to study the effect of decreasing the mesh-size $h$. We use the notation of Theorem \ref{thm:KoshelevNum}.
\begin{theorem}\label{thm:numericalConvergence}
Set $g=0$.
Assume the assumptions of Theorem \ref{thm:KoshelevNum} hold. Suppose moreover that \eqref{eq:addRegularity} is satisfied. Choose $u_0\in X_h$ and let $\{u_n^h\}$ be the sequence generated by \eqref{eq:numScheme}. Suppose $u\in W^{1,p}_0(\Omega)$ solves \eqref{eq:KoshProblem}. Then $u\in W^{1+\alpha,p}(\Omega)$ and $u^h\to u$ in $W^{1,p}(\Omega)$ as $h\to 0$. Moreover 
 we have the estimate \begin{align*}
\|u_{n+1}^h-u\|_{W^{1,p}(\Omega)}\leq c h^\frac{2\alpha}{\max(2,p)}c\left(\|f\|_{L^{p'}(\Omega)}\right)+C_1^n\|u_1-u_0\|_{W^{1,p}(\Omega)}.
\end{align*}
\end{theorem}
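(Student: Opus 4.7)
The plan is to bound the error via the triangle inequality
\[
\|u_{n+1}^h-u\|_{W^{1,p}(\Omega)}\leq \|u_{n+1}^h-u_h\|_{W^{1,p}(\Omega)}+\|u_h-u\|_{W^{1,p}(\Omega)},
\]
where the first term is controlled at linear rate by Theorem \ref{thm:KoshelevNum}. The heart of the proof is therefore the static error estimate $\|u-u_h\|_{W^{1,p}(\Omega)}\lesssim h^{2\alpha/\max(2,p)}$, which in turn needs the higher regularity of $u$.

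First I would upgrade $u$ to $W^{1+\alpha,p}(\Omega)$. Since $\Div a(x,\D u)=\Div f$, the function $u$ also solves the reference equation $\Div b(x,\D u)=\Div F$ with $F:=b(x,\D u)-a(x,\D u)+f$. By \ref{ass:A3}, $F\in L^{p'}(\Omega)$, so the hypothesis \eqref{eq:addRegularity} gives $u\in W^{1+\alpha,p}(\Omega)$ with a norm bound depending only on $\|f\|_{L^{p'}(\Omega)}$ (the $W^{1,p}$-term appearing on the right of \eqref{eq:addRegularity} is itself controlled by $\|f\|_{L^{p'}(\Omega)}^{1/(p-1)}$ after testing \eqref{eq:KoshProblem} with $u$ and using the coercivity built into \ref{ass:A2}, \ref{ass:A3}).

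Next I would establish the C\'{e}a-type bound. Subtracting \eqref{eq:KoshProblem} and \eqref{eq:numProblem} yields the Galerkin orthogonality
\[
\int_\Omega (a(x,\D u)-a(x,\D u_h))\cdot \D \phi\d x=0\quad\forall\phi\in X_h.
\]
Let $v_h\in X_h$ be the best approximation of $u$, so \eqref{eq:bestApproximation} gives $\|u-v_h\|_{W^{1,p}(\Omega)}\lesssim h^\alpha\|u\|_{W^{1+\alpha,p}(\Omega)}$. Using orthogonality with $\phi=u_h-v_h$, one rewrites the test of the error equation against $\D(u-u_h)$ as the same integrand against $\D(u-v_h)$. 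By \ref{ass:A2} the left-hand side is bounded below by $c\|V_\mu(\D u)-V_\mu(\D u_h)\|_{L^2(\Omega)}^2$, while \ref{ass:A2} together with Lemma \ref{lem:VYoung} bounds the right-hand side above by $\e\|V_\mu(\D u)-V_\mu(\D u_h)\|_{L^2(\Omega)}^2+C_\e\int_\Omega(\mu^2+|\D u|^2+|\D u_h|^2)^{\frac{p-2}{2}}|\D(u-v_h)|^2\d x$. Absorbing the small term, applying H\"older together with Lemma \ref{lem:VFuncEquiv}, and splitting into $p\geq 2$ and $p\leq 2$ produces $\|\D(u-u_h)\|_{L^p(\Omega)}\lesssim h^{2\alpha/p}$ and $\lesssim h^\alpha$ respectively, both of the form $h^{2\alpha/\max(2,p)}$. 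The implicit constants depend on $\|\D u\|_{L^p}+\|\D u_h\|_{L^p}$, which is bounded uniformly in $h$ by a quantity depending only on $\|f\|_{L^{p'}(\Omega)}$ by testing \eqref{eq:numProblem} against $u_h$ exactly as in Lemma \ref{cor:boundedness}.

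The main obstacle is this C\'{e}a step: in the vectorial $p$-growth setting one has to track the $V_\mu$-functional inequalities carefully to land on the sharp exponent $2\alpha/\max(2,p)$ and to verify that the constants depend only on $\|f\|_{L^{p'}(\Omega)}$ (through the uniform $L^p$-bound on $\D u_h$). Once the static estimate is in hand, combining it with Theorem \ref{thm:KoshelevNum} through the triangle inequality above gives the quantitative estimate, and letting $n\to\infty$ followed by $h\to 0$ yields $u^h\to u$ in $W^{1,p}(\Omega)$.
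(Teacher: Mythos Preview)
Your argument is correct and the overall architecture (triangle inequality, C\'ea-type step via Galerkin orthogonality, absorption with Lemma~\ref{lem:VYoung}, the $p\geq 2$/$p\leq 2$ split) matches the paper's proof. The one genuine difference is how you obtain $u\in W^{1+\alpha,p}(\Omega)$: you apply \eqref{eq:addRegularity} directly to $u$ by observing that $u$ solves the reference equation $\Div b(x,\D u)=\Div F$ with $F=b(x,\D u)-a(x,\D u)+f\in L^{p'}(\Omega)$ (uniqueness of solutions for the reference field then identifies the $W^{1+\alpha,p}$ solution with $u$). The paper instead runs the continuous iteration \eqref{eq:KoshIter} from $u_0=0$, applies \eqref{eq:addRegularity} to each iterate $u_n$, bounds $\|u_n\|_{W^{1+\alpha,p}(\Omega)}$ uniformly by induction (in the spirit of Lemma~\ref{cor:boundedness}), and then passes to the limit by weak compactness. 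Your route is shorter and avoids the compactness step; the paper's route reuses the iteration machinery already developed and makes the dependence of the constant on $\|f\|_{L^{p'}(\Omega)}$ emerge from those estimates, but the end result is the same.
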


\begin{proof}
Consider the iterative process $\{u_n\}$ started from $0$. By Theorem \ref{thm:KoshelevBasic}, $u_n\to u$ in $W^{1,p}(\Omega)$. Using \eqref{eq:addRegularity}, we find
\begin{align*}
\|u_n\|_{W^{1+\alpha,p}(\Omega)}\leq c\left(1+\|u_n\|_{W^{1,p}(\Omega)}+\|b(x,\D u_n)-\gamma a(x,\D u_n)+\gamma f\|_{L^{p'}(\Omega)}^{1/(p-1)}\right)
\end{align*}
Repeating arguments from the proof of Lemma \ref{cor:boundedness} we deduce using induction that there is $c(\|f\|_{L^{p'}(\Omega)})$ such that for all $n\geq 0$,
\begin{align*}
\|u_n\|_{W^{1+\alpha,p}(\Omega)}\leq c(\|f\|_{L^{p'}(\Omega)}).
\end{align*}
Extracting a weakly convergent subsequence we conclude the same estimate holds for $u$.

Let $v$ be the best approximation to $u$ in $X_h$. Using \ref{ass:A2}, the fact that $u,u_h$ solve \eqref{eq:KoshProblem} and \eqref{eq:numProblem} respectively we find
\begin{align*}
&\lambda_a\int_\Omega (\mu^2+|\D u|^2+|\D u^h|^2)^\frac{p-2}{2}|\D u-\D u^h|^2\d x\\
 \leq& \int_\Omega (a(x,\D u)-a(x,\D u^h))\D (u-u^h)\d x\\
=& \int_\Omega (a(x,\D u)-a(x,\D u^h))\D (u-v)\d x\\
\lesssim& \Lambda_a \int_\Omega (\mu^2+|\D u|^2+|\D (u-u^h)|^2)^\frac{p-2}{2}|\D (u-u^h)||\D (u-v)|\d x\\
\lesssim& C_\e\Lambda_a \int_\Omega (\mu^2+|\D u|^2+|\D u_h|^2)^\frac{p-2}{2}|\D (u-u_h)|^2\d x \\
&\quad+\e \Lambda_a \int_\Omega (\mu^2+|\D u|^2+|\D v|^2)^\frac{p-2}{2}|\D (u-v)|^2\d x
\end{align*}
where to obtain the last line we have used Lemma \ref{lem:VYoung}. Choosing $\e$ sufficiently large we conclude using \eqref{eq:bestApproximation}
\begin{align}\label{eq:DiffFinalNum}
I=&\int_\Omega (\mu^2+|\D u|^2+|\D u^h|^2)^\frac{p-2}{2}|\D (u-u^h)|^2\nonumber\\
\lesssim& \int_\Omega (\mu^2+|\D u|^2+|\D v|^2)^\frac{p-2}{2}|\D (u-v)|^2\d x\nonumber\\
\leq& \|\D (u-v)\|_{L^p(\Omega)}^2\left(1+\|\D u\|_{W^{1,p}(\Omega)}^{p-1}+\|\D (u-v)\|_{W^{1,p}(\Omega)}^{p-1}\right)\nonumber\\
\lesssim& h^{2\alpha} \|u\|_{W^{1+\alpha,p}(\Omega)}\left(1+\|u\|_{W^{1,p}(\Omega)}^{p-1}+h^{\alpha(p-1)}\|u\|_{W^{1+\alpha,p}(\Omega)}^{p-1}\right)
\end{align}
If $p\geq 2$, $I\geq \|\D (u-u_h)\|_{L^p(\Omega)}^p$ whereas if $p\leq 2$, we have by applying H\"older's inequality ${I\geq \|\D (u-u^h)\|_{L^p(\Omega)}^2 (1+\|\D u\|_{L^p(\Omega)}+\|\D u^h\|_{L^p(\Omega)})}$. Recalling the standard estimates $\|\D u\|_{L^p(\Omega)}\leq \|f\|_{L^{p'}(\Omega)}^{1/(p-1)}$ and $\|\D u^h\|_{L^p(\Omega)}\leq \|f\|_{L^{p'}(\Omega)}^{1/(p-1)}$, we conclude by combining \eqref{eq:DiffFinalNum}, Theorem \ref{thm:KoshelevNum} and the inequality
\begin{align*}
\|u_{n+1}^h-u\|_{W^{1,p}(\Omega)}\leq \|u_{n+1}^h-u^h\|_{W^{1,p}(\Omega)}+\|u^h-u\|_{W^{1,p}(\Omega)}.
\end{align*}
\end{proof}

We also have an a-posteriori error bound. The proof follows \cite{Heid2020}.
\begin{prop}
Assume the conditions of Theorem \ref{thm:KoshelevNum} hold. Then we have
\begin{align*}
\|\D (u_n^h-u^h)\|_{L^p(\Omega)}\leq C(\|f\|_{L^{p'}(\Omega)},\|\D u_0\|_{L^p(\Omega)})\|\D (u_n^h-u_{n-1}^h)\|_{L^p(\Omega)}^\frac{2}{\max(2,p)}.
\end{align*}
\end{prop}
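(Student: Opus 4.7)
Since both $u_n^h$ and $u^h$ lie in $X_h$, the difference $u_n^h - u^h$ is an admissible test function in both \eqref{eq:numScheme} and \eqref{eq:numProblem}. First I would subtract the discrete equation \eqref{eq:numProblem} defining $u^h$ (after multiplication by $\gamma$) from the iteration equation \eqref{eq:numScheme} defining $u_n^h$ to obtain
\begin{equation*}
\int_\Omega [b(x, \D u_n^h) - b(x, \D u_{n-1}^h)] \cdot \D \phi \d x = \gamma \int_\Omega [a(x, \D u^h) - a(x, \D u_{n-1}^h)] \cdot \D \phi \d x \qquad \forall\, \phi \in X_h.
\end{equation*}
Testing with $\phi = u_n^h - u^h$ and writing $a(x, \D u^h) - a(x, \D u_{n-1}^h) = [a(x, \D u^h) - a(x, \D u_n^h)] + [a(x, \D u_n^h) - a(x, \D u_{n-1}^h)]$ would isolate the monotone quantity $\int [a(x, \D u_n^h) - a(x, \D u^h)] \cdot \D(u_n^h - u^h)\d x$, with the remaining pieces involving only the consecutive-iterate difference $u_n^h - u_{n-1}^h$.

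The analysis of the resulting two sides would then reproduce the $V_\mu$-technique from the proof of Theorem \ref{thm:KoshelevBasic}. Ellipticity \ref{ass:A2} and Lemma \ref{lem:VFuncEquiv} bound the monotone term from below by a constant multiple of $\|V_\mu(\D u_n^h) - V_\mu(\D u^h)\|_{L^2(\Omega)}^2$, the natural metric in this setting. For the error terms, the mean value theorem applied to $a$ and $b$, together with the pointwise identity $|b(x, \eta) - b(x, \xi)| \lesssim \Lambda_b(\mu^2+|\eta|^2+|\xi|^2)^{(p-2)/4}|V_\mu(\eta) - V_\mu(\xi)|$ (and its analogue for $a$) and Cauchy--Schwarz, yield an upper bound by $\|V_\mu(\D u_n^h) - V_\mu(\D u_{n-1}^h)\|_{L^2}$ times a weighted $L^2$-norm of $\D(u_n^h - u^h)$. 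Exploiting the uniform $W^{1,p}$-boundedness of $\{u_n^h\}$ from Lemma \ref{cor:boundedness} (and applying Hölder to the weight with exponent $\tfrac{p}{p-2}$ when $p\geq 2$) together with a Young-type absorption, one then obtains an intrinsic contraction
\begin{equation*}
\|V_\mu(\D u_n^h) - V_\mu(\D u^h)\|_{L^2(\Omega)}^2 \leq C\, \|V_\mu(\D u_n^h) - V_\mu(\D u_{n-1}^h)\|_{L^2(\Omega)}^2,
\end{equation*}
with $C$ depending only on $\|f\|_{L^{p'}(\Omega)}$ and $\|\D u_0\|_{L^p(\Omega)}$ through the $W^{1,p}$-bound.

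Finally, I would translate this $V_\mu$-contraction into the stated $L^p$-gradient inequality via Lemma \ref{lem:VFuncEquiv}. For $p\geq 2$ the lower bound $\|V_\mu(\D u_n^h) - V_\mu(\D u^h)\|_{L^2}^2 \gtrsim \|\D(u_n^h - u^h)\|_{L^p}^p$ combined with the upper bound $\|V_\mu(\D u_n^h) - V_\mu(\D u_{n-1}^h)\|_{L^2}^2 \lesssim \|\D(u_n^h - u_{n-1}^h)\|_{L^p}^2$ (via Hölder and boundedness) produces the exponent $\tfrac{2}{p}$ after taking $p$-th roots. For $p\leq 2$ the same two bounds must be used with the asymmetry reversed: Hölder's inequality with exponents $\tfrac{2}{p}$ and $\tfrac{2}{2-p}$ yields $\|\D(u_n^h - u^h)\|_{L^p}^p \lesssim \|V_\mu(\D u_n^h) - V_\mu(\D u^h)\|_{L^2}^p$ (up to bounded factors involving $\|\mu^2+|\D u_n^h|^2+|\D u^h|^2\|_{L^{p/2}}$), which delivers the exponent $1=\tfrac{2}{\max(2,p)}$. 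The hard part will be exactly this last step: the exponent $\tfrac{2}{\max(2,p)}$ arises from the mismatch between the intrinsic $L^2$-structure carried by $V_\mu$ and the extrinsic $L^p$-structure of the gradient norm, so Lemma \ref{lem:VFuncEquiv} must be invoked in opposite directions on the two sides of the estimate and the Hölder calibration must be adapted to the regime $p\geq 2$ or $p\leq 2$.
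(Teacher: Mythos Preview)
Your approach is correct and will deliver the stated estimate, but it differs from the paper's in the choice of test function. You test the identity
\[
\int_\Omega [b(x,\D u_n^h)-b(x,\D u_{n-1}^h)]\cdot\D\phi\d x = \gamma\int_\Omega [a(x,\D u^h)-a(x,\D u_{n-1}^h)]\cdot\D\phi\d x
\]
with $\phi=u_n^h-u^h$ and then split $a(x,\D u^h)-a(x,\D u_{n-1}^h)$ to isolate the monotone term $\int[a(x,\D u_n^h)-a(x,\D u^h)]\cdot\D(u_n^h-u^h)$; this leaves \emph{two} cross terms on the right (one from $b$, one from $a$), each controlled by the consecutive-iterate difference, and each requiring its own Young absorption.

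The paper instead tests the same identity with $\phi=u^h-u_{n-1}^h$. With that choice the right-hand side is \emph{already} the monotone quantity $\gamma\int[a(x,\D u^h)-a(x,\D u_{n-1}^h)]\cdot\D(u^h-u_{n-1}^h)$, with no splitting needed, and the left-hand side is a single $b$-difference term. One application of Lemma~\ref{lem:VYoung} then yields $\|\D(u^h-u_{n-1}^h)\|_{L^p}\lesssim \|\D(u_n^h-u_{n-1}^h)\|_{L^p}^{2/\max(2,p)}$, after which the triangle inequality $\|\D(u_n^h-u^h)\|\leq\|\D(u^h-u_{n-1}^h)\|+\|\D(u_n^h-u_{n-1}^h)\|$ closes the argument. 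So your route avoids the final triangle inequality at the price of handling two cross terms rather than one and of a mild weight mismatch (the monotone side carries $|\D u_n^h|,|\D u^h|$ while the cross terms carry $|\D u_n^h|,|\D u_{n-1}^h|$), which you correctly plan to absorb via the uniform $W^{1,p}$-bound. The paper's route keeps the algebra minimal but passes through $u_{n-1}^h$ first. The $V_\mu$-to-$L^p$ translation producing the exponent $2/\max(2,p)$ is the same in both arguments.
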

\begin{proof}
We compute using \ref{ass:A2} and Lemma \ref{lem:VYoung},
\begin{align*}
&\lambda\gamma\int_\Omega (\mu^2+|\D (u^h-u_{n-1}^h)|^2+|\D u_{n-1}^h|^2)^\frac{p-2}{2}|\D u^h-\D u_{n-1}^h|^2\d x\\
\lesssim& \gamma\int_\Omega (a(x,\D u^h)-a(x,\D u_{n-1}^h))\cdot \D (u^h-u_{n-1}^h)\d x\\
=& \int_\Omega \gamma f\cdot \D (u^h-u_{n-1}^h) + (b(x,\D (u_n^h))-b(x,\D u_{n-1}^h))-\gamma f)\cdot \D (u^h-u_{n-1}^h)\d x\\
\lesssim&\int_\Omega \left(\mu^2+|\D u_{n-1}^h|^2+|\D (u_n^h-u_{n-1}^h)|^2\right)^\frac{p-2}{2}|\D (u_n^h-u_{n-1}^h)| |\D (u^h-u_{n-1}^h)|\d x\\
\leq& \e\int_\Omega \left(\mu^2+|\D u_{n-1}^h|^2+|\D (u_n^h-u_{n-1}^h)|^2\right)^\frac{p-2}{2}|\D (u_n^h-u_{n-1}^h)|^2\d x\\
&\quad +C_\e \int_\Omega (\mu^2+|\D (u^h-u_{n-1}^h)|^2+|\D u_{n-1}^h|^2)^\frac{p-2}{2}|\D u^h-\D u_{n-1}^h|^2\d x
\end{align*}
Thus choosing $\e$ sufficiently large, re-arranging, employing by now standard arguments and recalling that $\{u_n^h\}$ is bounded uniformly in $W^{1,p}(\Omega)$ we obtain
\begin{align*}
\|\D (u^h-u_{n-1}^h)\|_{L^p(\Omega)}\lesssim c\left(\|\D u^h\|_{L^p(\Omega)},\|\D u_0\|_{L^p(\Omega)}\right)\|\D (u_n^h-u_{n-1}^h)\|_{L^p(\Omega)}^\frac{2}{\max(2,p)}.
\end{align*}

By the triangle inequality, and using $\|u^h\|_{W^{1,p}(\Omega)}\lesssim \|f\|_{L^{p'}(\Omega)}^{1/(p-1)}$, we conclude the desired estimate:
\begin{align*}
\|\D (u_n^h-u^h)\|_{L^p(\Omega)}\leq& \|\D (u_{n-1}^h-u^h)\|_{L^p(\Omega)}+\|\D (u_n^h-u_{n-1}^h)\|_{L^p(\Omega)}\\
\leq& c\left(\|f\|_{L^{p'}(\Omega)},\|\D u_0\|_{L^p(\Omega)}\right)\|\D (u_n^h-u_{n-1}^h)\|_{L^2(\Omega)}.
\end{align*}
\end{proof}

We close this section by detailing a modification of the numerical scheme, following the algorithm outlined in \cite{Heid2020}. We strengthen our assumptions on $\{T_h\}$ and assume $\{T_k\}_{k\in \N}$ is a sequence of shape-regular triangulations with meshsize $h_k\to 0$ as $k\to\infty$. Moreover, we assume that $\{T_k\}$ is obtained from $\{T_{k-1}\}$ by refinement. Denote by $X^k$ the space of continuous piecewise linear functions subordinate to $\{T_k\}$.
We then consider the following algorithm:
\begin{algorithm}
\begin{algorithmic}
\STATE Set $u_0^0=0$. Set $k=0$. Choose a maximal mesh $k_{max}$ and a sequence of tolerances $\tp{tol}_k\to 0$ as $k\to\infty$.
\REPEAT
\WHILE{$\|u_n^k-u_{n-1}^k\|> \tp{tol}_k$}
\STATE solve $-\Delta u^k_{n+1} = -\Delta u^k_n +\gamma \Div (a(x,\D u^k_n)-f)$ in $X^k$
\STATE $n \leftarrow n+1$
\ENDWHILE
\STATE $u_0^{k+1}\leftarrow u_n^k$
\STATE $k\leftarrow k+1$
\UNTIL $k = k_{max}$.
\end{algorithmic}
\caption{A variant of the iteration scheme}
\label{alg:alternative}
\end{algorithm}

From the results of this section it is clear that under the assumptions of Theorem \ref{thm:numericalConvergence} Algorithm \ref{alg:alternative} converges to a solution of \eqref{eq:KoshProblem} as $k_{max}\to\infty$.

\subsection{Numerical experiments}
The results presented in this section are obtained using Firedrake \cite{petsc-user-ref,petsc-efficient,Dalcin2011,Rathgeber2016,MUMPS01,MUMPS02}.
 Throughout this section $\Omega = [0,1]^3$. We consider shape-regular triangulations of $\Omega$ with uniformly spaced nodes at distance $h =2^{-i}$. Note that in this set-up the mesh-size is $\sqrt{3}h$.

\textbf{A linear example:} We first consider a linear example where the exact solution is known. We choose, $$f = \left(\begin{matrix}8\pi & 8\pi & 8\pi\\ 10\pi& 10\pi & 10\pi \\ 2\pi& 2\pi & 2\pi\end{matrix}\right)(v,v,v)^T \quad\text{ where } v = \left(\begin{matrix}cos(2\pi x)sin(2\pi y)sin(2\pi z) \\ sin(2\pi x)cos(2\pi y) sin(2\pi z) \\ sin(2\pi x)sin(2\pi y)cos(2\pi z)\end{matrix}\right)$$
and consider the problem
\begin{align*}
\begin{cases}-\Div A\,\D u = \Div f \quad&\text{ in } \Omega\\
u = 0 &\text{ on } \p\Omega
\end{cases}\qquad \text{ where } A = \left(\begin{matrix} 1 & 1 & 2 \\  0 & 2 & 3 \\ 0 & 0 & 1
\end{matrix}\right).
\end{align*}
Note that with $\tilde u(x,y,z)=sin(2\pi x)sin(2\pi y)sin(2\pi z)$, $$u(x,y,z)=(\tilde u(x,y,z),\tilde u(x,y,z),\tilde u(x,y,z))^T$$ is the exact solution of this problem. 

For the iteration scheme we choose $u_0 = 0$, $b(x,z)=z$ and $\gamma = 2/3$ or $\gamma = 1/2$. We solve each iteration step using GMRES with an incomplete LU factorisation to precondition the problem. The iteration is terminated when $\|u_{n+1}-u_n\|_{H^1(\Omega)}\leq 10^{-9}$.

We record the $H^1$-error of the numerical solution $v$ computed using the iterative scheme \eqref{eq:numScheme} in Table \ref{tab:linearExampleConv}. We also record the number of iterations needed.
\begin{table}[ht]
\begin{center}
\begin{tabular}{ c|c|c|c|c|c }
$h$  & $2^{-1}$ & $2^{-2}$ & $2^{-3}$ & $2^{-4}$ & $2^{-5}$ \\
\hline
$\gamma = 2/3$, error & $6.6924$ & $5.3767$ & $3.2456$ & $1.6786$ & $0.8436$\\
\hline
$\gamma = 2/3$, iterations & $1$ & $22$ & $22$ & $22$ &  $22$\\
\hline
$\gamma = 1/2$, error & $6.6924$ & $5.3767$ & $3.2456$ & $1.6786$ & $0.8436$\\
\hline
 $\gamma = 1/2$, iterations & $1$ & $33$ & $33$ & $34$ & $34$
\end{tabular}
\end{center}
\caption{$\|u-v\|_{H^1(\Omega)}$}\label{tab:linearExampleConv}
\end{table}

We also compute the numerical solution $u^h$ directly using a $LU$-factorisation. We record the $H^1$-distance between $u_n^h$ and $u^h$ for two different choices of $\gamma$ in Fig.\ref{fig:linear}.
\begin{figure}[h!]
  \centering
  \begin{subfigure}[b]{0.4\linewidth}
    \caption{$\gamma = 2/3$}
        \includegraphics[width=\linewidth]{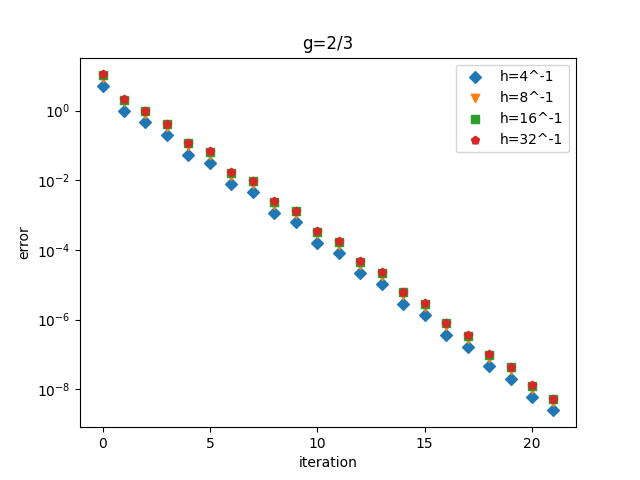}
    \caption{$\gamma = 2/3$}
  \end{subfigure}
  \begin{subfigure}[b]{0.4\linewidth}
    \includegraphics[width=\linewidth]{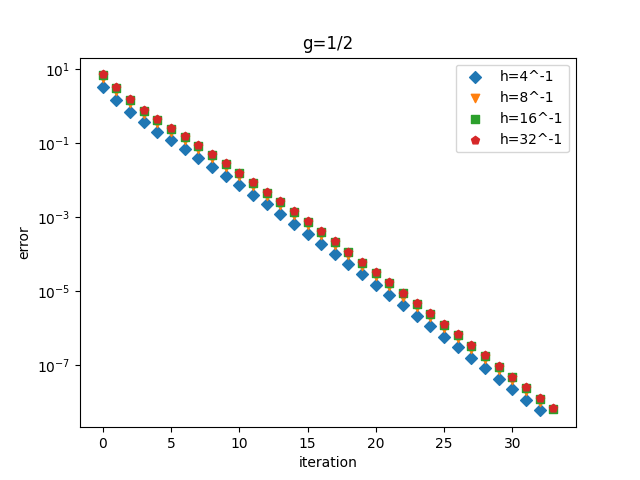}
    \caption{$\gamma = 1/2$}
  \end{subfigure}
  \caption{$\|u_n^h-u_h\|_{H^1(\Omega)}$}
\label{fig:linear}
\end{figure}
 
\textbf{A nonlinear example}
We also consider the nonlinear problem
\begin{align*}
\begin{cases}
-\Div (1+|\D u|^4) A\D u + |u|^4 u = (y,x^2,z^2+x^2)^T\quad &\text{ in } \Omega \\
 u = 0 &\text{ on } \p\Omega 
 \end{cases} \qquad \text{with } A=\left(\begin{matrix} 1 & 3 & 5 \\  0 & 2 & 4 \\ 0 & 0 & 1\end{matrix}\right).
 \end{align*}
We set $b(x,z)=(1+|z|^4)z$, $\gamma = 0.65$ and $u_0 = 0$ and employ \eqref{eq:numScheme}. Each (non-linear) iteration step is solved using the standard 'solve'-method in Firedrake. This utilises a nonlinear Newton linesearch scheme where the linear step is computed using GMREs. Denote the solution obtained in this way with $u_{dir}$. We record the $H^1$-distance $\|u_{dir}-u_n^h\|_{H^1(\Omega)}$ in Fig.\ref{fig:nonlinear}.

\begin{figure}[h!]
\begin{center}
\includegraphics[width = \linewidth/2]{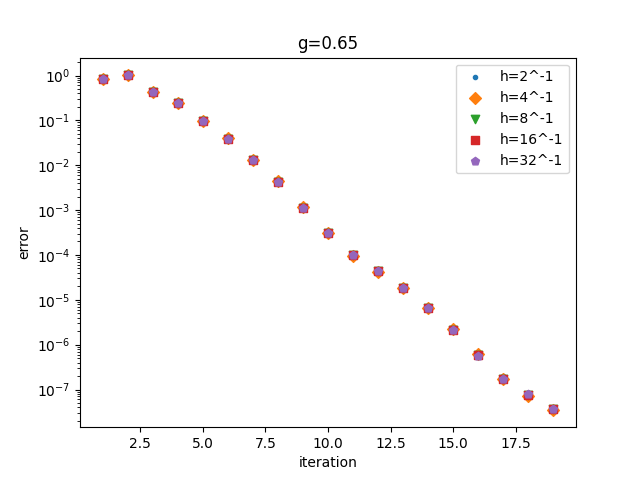}
\end{center}
\caption{$\|u_{dir}-u_n^h\|_{L^2(\Omega)}$}
\label{fig:nonlinear}
\end{figure}

For this problem, we moreover compare computing times for \eqref{eq:numScheme} and Algorithm \eqref{alg:alternative} in Table \ref{tab:timing}. For \eqref{eq:numScheme} we use $\|u_{n+1}-u_n\|_{H^1(\Omega)}\leq 10^{-9}$ as our termination condition, while for Algorithm \ref{alg:alternative} we choose $\tp{tol}_k = 10^{-(i+4)}$ when $h=2^{-i}$.
\begin{table}[ht]
\begin{center}
\begin{tabular}{ c|c|c|c|c|c }
$h$ & $2^{-1}$ & $2^{-2}$ & $2^{-3}$ & $2^{-4}$ & $2^{-5}$ \\
\hline
iteration scheme & $0.201$ & $0.715$ & $3.931$ & $47.48$ & $821.4$\\
\hline
Algorithm \ref{alg:alternative} & $0.333$ & $1.000$ & $3.3365$ & $33.41$ & $576.9$\\
\end{tabular}
\end{center}
\caption{runtime in seconds}\label{tab:timing}
\end{table}
\begin{remark}
We do not apply a scheme optimised for the $p$-Laplacian in order to solve each iteration step. Improving our computation in this way a further reduction in runtime should be achieved.
\end{remark}

\section{Improved regularity results}\label{sec:higherReg}
In this section, we assume that $$\label{eq:pLaplaceField}b(x,z)= (B(x)z\cdot z)^\frac{p-2}{2}B(x)z,$$ where $B$ is measurable and satisfies for $x\in\Omega$, $\xi\in\R^{N \times n}$,
\begin{align}\label{ass:Ax}
\lambda |\xi|^2\leq B(x)\xi\cdot \xi\leq \Lambda |\xi|^2.
\end{align}
We note that $b$ satisfies $\mathrm{\ref{ass:A1}-\ref{ass:A3}}$ with $\mu=0$. We will further assume that $B$ is symmetric almost everywhere in $\Omega$, so that \eqref{ass:A4} holds.

\subsection{A first example: Calder\'{o}n-Zygmund type estimates}\label{subsec:HigherDiff}
We will use fields satisfying the assumptions of Theorem \ref{thm:Kinnunen} as reference fields and apply the iterative process to obtain the following result.

\begin{theorem}\label{thm:ZygmundEstimates}
Let $1<p<\infty$. Suppose $f\in W^{1,q/(p-1)}(\Omega)$ and let $g=0$. Assume $a$ satisfies $\mathrm{\ref{ass:A1}-\ref{ass:A3}}$ and \eqref{ass:A4} with $\mu=0$. Further suppose there is $B$ such that \eqref{ass:Ax} and the assumptions of Theorem \ref{thm:Kinnunen} are satisfied. Finally assume that with this choice of $b$ the assumptions of Theorem \ref{thm:KoshelevBasic} are satisfied. If
$$C_1 =\dfrac{C_0^{(p-1)/q}\Lambda_b K_{a,b}}{(p-1)}<1,$$ where $K_{a,b}$ is given by \eqref{def:K}, then the solution $u$ of \eqref{eq:KoshProblem} satisfies
\begin{align*}
\|\D u\|_{L^q(\Omega)}\lesssim 1+\|f\|_{L^{q/(p-1)}}^\frac{1}{p-1}.
\end{align*}
Here $C_0$ is the constant arising in Theorem \ref{thm:Kinnunen}.
\end{theorem}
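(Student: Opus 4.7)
The plan is to combine the iterative scheme from Theorem \ref{thm:KoshelevBasic} (which I already know converges in $W^{1,p}$) with the Calder\'on--Zygmund estimate of Theorem \ref{thm:Kinnunen} (applied at each step to the reference equation) and derive a linear recursion for $\|\D u_n\|_{L^q}^{p-1}$ whose contraction constant is exactly $C_1$. Uniform $L^q$-boundedness of $\{\D u_n\}$ will then transfer to $\D u$ by weak compactness and lower semicontinuity, since the $W^{1,p}$-strong limit is already identified with $u$.

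Concretely, I start from $u_0 \equiv 0$ and generate $\{u_n\}$ via \eqref{eq:KoshIter}. Each $u_{n+1}$ is the weak solution of the reference equation $\Div b(x,\D u_{n+1}) = \Div F_n$ with $F_n = b(x,\D u_n) - \gamma a(x,\D u_n) + \gamma f$, so Theorem \ref{thm:Kinnunen} gives
\[
\|\D u_{n+1}\|_{L^q(\Omega)}^{p-1} \leq C_0^{(p-1)/q}\,\|F_n\|_{L^{q/(p-1)}(\Omega)}.
\]
The critical input I would establish first is the pointwise bound
\[
|b(x,z) - \gamma a(x,z)| \leq \frac{K_{a,b}\Lambda_b}{p-1}\,|z|^{p-1} + c\gamma,
\]
uniformly in $(x,z)$. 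Because $\mu = 0$, the explicit form of $b$ forces $b(x,0) = 0$, and the fundamental theorem of calculus writes $b(x,z) - \gamma a(x,z) = -\gamma a(x,0) + \int_0^1 (B(x,tz) - \gamma A(x,tz))\, z\, \d t$. Lemma \ref{lem:normEstimate} bounds the integrand by $K_{a,b}\|B(x,tz)\|\,|z| \leq K_{a,b}\Lambda_b\, t^{p-2}|z|^{p-1}$, and the integral $\int_0^1 t^{p-2}\d t = (p-1)^{-1}$ is finite because $p > 1$; the residual $c\gamma$ absorbs $\gamma|a(x,0)|$ via \ref{ass:A3}.

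Taking $L^{q/(p-1)}$-norms of the pointwise bound and inserting into the Calder\'on--Zygmund inequality above produces, with $X_n \coloneqq \|\D u_n\|_{L^q(\Omega)}^{p-1}$, the scalar recursion $X_{n+1} \leq C_1 X_n + \widetilde C$, where the contraction constant is exactly $C_1 = C_0^{(p-1)/q}\Lambda_b K_{a,b}/(p-1)$ from the hypothesis and $\widetilde C \lesssim 1 + \|f\|_{L^{q/(p-1)}}$. Since $C_1 < 1$ and $X_0 = 0$, induction yields $\sup_n X_n \lesssim 1 + \|f\|_{L^{q/(p-1)}}$, i.e.\ a uniform $L^q$-bound of the claimed shape on $\D u_n$. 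By weak compactness a subsequence satisfies $\D u_{n_k} \rightharpoonup \D u$ in $L^q(\Omega)$ (the limit is forced to be $\D u$ by the $W^{1,p}$-strong convergence $u_n \to u$ from Theorem \ref{thm:KoshelevBasic}), and lower semicontinuity of the $L^q$-norm closes the argument.

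I expect the main, though mild, obstacle to be pinning down the constant $(p-1)^{-1}$ in the pointwise bound on $b - \gamma a$: when $p \in (1,2)$ the factor $t^{p-2}$ is singular at $t = 0$, integrable only because $p > 1$, and one must verify that Lemma \ref{lem:normEstimate} applies uniformly along the whole segment $\{tz : 0 < t \leq 1\}$ with a common constant $K_{a,b}$, so that the resulting scaling matches the threshold in the hypothesis on $C_1$ verbatim. Everything else is a routine Banach-type fixed-point estimate driven by the sharp Calder\'on--Zygmund constant $C_0$.
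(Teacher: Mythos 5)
Your proposal is correct and follows essentially the same route as the paper's proof: start the iteration at $u_0=0$, apply Theorem \ref{thm:Kinnunen} to each reference step, bound $b(x,z)-\gamma a(x,z)$ (up to the $a(x,0)$, $b(x,0)$ terms) via the mean value theorem and Lemma \ref{lem:normEstimate} with the factor $\int_0^1\theta^{p-2}\,\mathrm{d}\theta=(p-1)^{-1}$, iterate the resulting contraction with constant $C_1$, and pass to the limit by weak compactness and lower semicontinuity using the $W^{1,p}$-convergence from Theorem \ref{thm:KoshelevBasic}. The point you flag about $t^{p-2}$ for $p\in(1,2)$ is harmless, exactly as you anticipate, and is treated the same way (implicitly) in the paper.
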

\begin{proof}

Let $\{u_n\}$ be the sequence generated by the iterative process with $u_0 = 0$ and with the choice of $\gamma$ as in in Theorem \ref{thm:KoshelevBasic}. By Theorem \ref{thm:Kinnunen}, Lemma \ref{lem:normEstimate} and \ref{ass:A3}, we have, for some $c>0$, the estimate
\begin{align*}
\int_\Omega |\D u_{n+1}|^q\d x \leq& C_0 \int_\Omega |b(x,\D u_n)-\gamma a(x,\D u_n)+\gamma f|^\frac{q}{p-1}\d x\\
\leq& C_0 \int_\Omega |b(x,\D u_n)-b(x,0)-\gamma (a(x,\D u_n)+a(x,0))+\gamma f|^\frac{q}{p-1}\d x+ c\\
\leq& C_0\Lambda_b^\frac{q}{p-1} K^\frac{q}{p-1}\int_\Omega \left(\int_0^1 \theta^{p-2}\d\theta\right)^\frac{q}{p-1} |\D u_n|^q + C_0\int_\Omega |f|^\frac{q}{p-1}\d x+c\\
=& \frac{C_0\Lambda_b^\frac{q}{p-1} K^\frac{q}{p-1}}{(p-1)^{q/(p-1)}} \int_\Omega |\D u_n|^q+C_0\int_\Omega |f|^\frac{q}{p-1}\d x+c.
\end{align*}
Thus, as by assumption $C_1<1$, we find by induction
\begin{align*}
\|\D u_{n+1}\|_{L^q(\Omega)}\lesssim \frac{C_1^\frac{q}{p-1}}{1-C_1^\frac{q}{p-1}} (1+\|f\|_{L^{q/(p-1)}}^\frac{1}{p-1}).
\end{align*}
Extracting a weakly convergent subsequence and noting that $u_n\to u$ in $W^{1,p}(\Omega)$ by Theorem \ref{thm:KoshelevBasic}, where $u$ is the solution of \eqref{eq:KoshProblem}, we find the desired estimate holds.
\end{proof}
\subsection{A second example: Weighted estimates and H\"older continuity}
We can use the Koshelev iteration to perturb Theorem \ref{thm:higherRegMengesha} as follows:

\begin{theorem}\label{thm:KoshelevWeighted}
Suppose $a$ is a field satisfying $\mathrm{\ref{ass:A1}-\ref{ass:A3}}$ and \eqref{ass:A4} with $\mu=0$ and moreover there is symmetric $B$ satisfying the assumptions of Theorem \ref{thm:higherRegMengesha} such that the assumptions of Theorem \ref{thm:KoshelevBasic} apply with this choice of $b(x,z)$. If
\begin{align*}
C_3 =\dfrac{C_2^\frac{p-1}{q} K_{a,b} \Lambda_b}{(p-1)}<1,
\end{align*}
where $K_{a,b}$ is given by \eqref{def:K}, then the solution $v$ of the boundary value problem \eqref{eq:KoshProblem} satisfies the estimate
\begin{align*}
\|\D v\|_{L^q_w(\Omega)}\lesssim 1+\|f\|_{L^\frac{q}{p-1}_w(\Omega)}^\frac{1}{p-1}.
\end{align*}
Here $C_2$ is the constant arising in Theorem \ref{thm:higherRegMengesha}.
\end{theorem}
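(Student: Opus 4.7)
The strategy is a direct parallel of the proof of Theorem \ref{thm:ZygmundEstimates}, with the weighted Calder\'on--Zygmund estimate of Theorem \ref{thm:higherRegMengesha} taking the place of the unweighted estimate of Theorem \ref{thm:Kinnunen}. I start the iteration \eqref{eq:KoshIter} from $u_0=0$, with the choice of $\gamma$ furnished by Theorem \ref{thm:KoshelevBasic}, so that the generated sequence $\{u_n\}$ converges strongly in $W^{1,p}(\Omega)$ to the unique solution $v$ of \eqref{eq:KoshProblem}. The plan is then to propagate a uniform $L^q_w$-bound on $\D u_n$ through the iteration and to identify the weak $L^q_w$-limit with $\D v$.

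For the recursion step, note that $u_{n+1}$ solves $\Div b(x,\D u_{n+1}) = \Div F_n$ with source $F_n = b(x,\D u_n) - \gamma a(x,\D u_n) + \gamma f$. The mean-value identity
\[
b(x,z)-\gamma a(x,z) - \bigl(b(x,0)-\gamma a(x,0)\bigr) = \int_0^1 \bigl(B(x,\theta z)-\gamma A(x,\theta z)\bigr)z\,\d\theta,
\]
together with Lemma \ref{lem:normEstimate} and \ref{ass:A2} with $\mu=0$, yields the pointwise bound
\[
|b(x,z)-\gamma a(x,z)| \leq \frac{\Lambda_b K_{a,b}}{p-1}|z|^{p-1} + |b(x,0)|+\gamma|a(x,0)|.
\]
Since $b(x,0)=0$ from the form \eqref{eq:pLaplaceField} and $|a(x,0)|\leq c$ by \ref{ass:A3}, raising to the power $q/(p-1)$, integrating against $w$, and invoking Theorem \ref{thm:higherRegMengesha} produces the recursion
\[
\|\D u_{n+1}\|_{L^q_w(\Omega)}^q \leq C_3^{q/(p-1)}\,\|\D u_n\|_{L^q_w(\Omega)}^q + c\bigl(1+\|f\|_{L^{q/(p-1)}_w(\Omega)}^{q/(p-1)}\bigr),
\]
where the identification of the coefficient uses $C_3^{q/(p-1)} = C_2\bigl(\Lambda_b K_{a,b}/(p-1)\bigr)^{q/(p-1)}$.

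Because $C_3<1$ by hypothesis, the geometric recursion starting from $u_0=0$ delivers the uniform bound $\|\D u_n\|_{L^q_w(\Omega)} \lesssim 1 + \|f\|_{L^{q/(p-1)}_w(\Omega)}^{1/(p-1)}$. As $L^q_w(\Omega)$ is reflexive (since $q>1$ and $w\in A_{q/p}$), some subsequence $\D u_{n_k}$ converges weakly in $L^q_w(\Omega)$; the strong $W^{1,p}$ convergence $u_n\to v$ forces the weak limit to coincide with $\D v$, and weak lower-semicontinuity of the norm transfers the estimate to $v$. I do not anticipate a serious obstacle: the algebraic reduction of $|b(x,z)-\gamma a(x,z)|$ through Lemma \ref{lem:normEstimate} in the case $\mu=0$ is the only substantive step, and it is entirely analogous to the computation already performed in the proof of Theorem \ref{thm:ZygmundEstimates}.
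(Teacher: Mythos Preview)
Your proposal is correct and follows essentially the same route as the paper's proof: start the iteration at $u_0=0$ with the $\gamma$ from Theorem~\ref{thm:KoshelevBasic}, use the mean-value identity together with Lemma~\ref{lem:normEstimate} and $\mu=0$ to obtain the pointwise bound $|b(x,z)-\gamma a(x,z)|\leq \frac{\Lambda_b K_{a,b}}{p-1}|z|^{p-1}+c$, feed this into the weighted estimate of Theorem~\ref{thm:higherRegMengesha} to get a geometric recursion with ratio $C_3^{q/(p-1)}<1$, and pass to the limit via weak compactness in $L^q_w$ combined with the strong $W^{1,p}$ convergence. The paper's argument is identical in structure and detail.
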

\begin{proof}
The proof is similar to the proof of Theorem \ref{thm:ZygmundEstimates}. Let $\{u_n\}$ be given by the iterative process with $u_0 = 0$ and $\gamma$ chosen as in Theorem \ref{thm:KoshelevBasic}.

Note that $b(x,0)-\gamma a(x,0)+\gamma f\in L^\frac{q}{p-1}_w(\Omega)$. Thus we find,
\begin{align*}
\|\D u_{n+1}\|_{L^q_w(\Omega)}^q \leq& C_2 \int_\Omega |b(x,\D u_n)-\gamma a(x,\D u_n)+\gamma f|^\frac{q}{p-1}w(x)\d x\\
\leq& C_2 \int_\Omega |b(x,\D u_n)-b(x,0)-\gamma (a(x,\D u_n)-a(x,0))|^\frac{q}{p-1}w(x)\d x \\
&+C_2 |b(x,0)-\gamma a(x,0)+\gamma f|^\frac{q}{p-1}w(x)\d x\\
\leq& C_2 K_{a,b}^\frac{q}{p-1}\Lambda_b^\frac q {p-1}2^{-\frac q {p-1}} \|\D u_n\|_{L^q_w(\Omega)}^q+ c(C_2,p,q)\left(1+\|f\|_{L^\frac{q}{p-1}_w(\Omega)}^\frac{q}{p-1}\right).
\end{align*}
Hence by induction, as by assumption $C_3<1$,
\begin{align*}
\|\D u_{n+1}\|_{L^q_w(\Omega)}\lesssim \dfrac{C_3^\frac{q}{p-1}}{1-C_3^\frac{q}{p-1}}\left(1+\|f\|_{L^\frac{q}{p-1}_w(\Omega)}^\frac q {p-1}\right).
\end{align*}
Extracting a weakly convergent subsequence and noting that $u_n\to u$ in $W^{1,p}(\Omega)$ by Theorem \ref{thm:KoshelevBasic} where $u$ solves \eqref{eq:KoshProblem} we conclude the desired estimate, after passing to the limit in the estimate.
\end{proof}

We note that of particular interest is the choice $w(x) = |x|^\alpha$ which can be used to obtain estimates in the the familiar Morrey spaces and hence through the Morrey-Sobolev embedding allows to obtain continuity statements. Recall the definition of the $L^{q,\theta}$-Morrey-norm:
\begin{align*}
\|u\|_{L^{q,\theta}(\Omega)} = \sup_{0<r<\text{diam}(\Omega),z\in\Omega}r^\frac{\theta-n}{q}\|u\|_{L^q(B_r(z)\cap\Omega)},
\end{align*}
where $\theta\in(0,n)$.
We assume that all the assumptions and the notation of Theorem \ref{thm:KoshelevWeighted} hold and show how to deduce estimates in Morrey spaces. 

Fix $z\in\Omega$, $r\in(0,\text{diam}(\Omega))$ and choose for $\rho\in(0,\theta)$,
\begin{align*}
w(x)= \min\left(|x-z|^{-n+\theta-\rho},r^{-n+\theta-\rho}\right).
\end{align*}
Then by our previous work,
\begin{align*}
\|\D u\|_{L^{q,\theta}(B_r(z)\cap\Omega)}^q\leq r^{n-\theta+\rho}\|\D u\|_{L^q_w(B_r(z)\cap\Omega)}^q\leq r^{n-\theta+\rho}c(C,p,q)\left(1+\|f\|_{L^\frac{q}{p-1}_w(\Omega)}^\frac q {p-1}\right).
\end{align*}
It remains to estimate $\|f\|_{L^\frac{q}{p-1}_w(\Omega)}$. For this we proceed exactly as \cite{Mengesha2012} but provide the argument here for the sake of completeness. We will show that
$\|f\|_{L^\frac{q}{p-1}_w(\Omega)}\leq c \|f\|_{L^{q,\theta}(\Omega)}^q r^{-\rho}$, which will conclude the proof.

It is convenient to introduce $f'$ where $|f'|^{p-2}f' = f$.
For $\alpha>0$ we denote the set ${E_\alpha = \{x\in\Omega\colon |f'|>\alpha\}}$. Then we can write
\begin{align*}
\|f\|_{L^\frac{q}{p-1}(\Omega)}^q = \|f'\|_{L^q(\Omega)}^q =& q \int_0^\infty \alpha^q \int_{E_\alpha} w(x)\d x\frac{d\alpha}{\alpha}\\
\leq& q\int_0^\infty \alpha \int_0^{r^{-n+\theta-\rho}} \left|E_\alpha \cap B_{\beta^\frac{1}{-n+\theta-\rho}}(z)\right|\d \beta \frac{d\alpha}{\alpha}.
\end{align*}
We now estimate the inner integral as follows:
\begin{align*}
\int_0^{r^{n+\theta-\rho}} \left|E_\alpha \cap B_{\beta^\frac{1}{-n+\theta-\rho
}}(z)\right|d\beta\leq& \sum_{i=1}^\infty 2^{-i}r^{-n+\theta-\beta}\left|E_\alpha \cap B_{r 2^\frac{-i}{-n+\theta-\rho}}(z)\right|\\
\leq& 2 \int_0^{\frac 1 2 r^{-n+\theta-\rho}} \beta \left|E_\alpha \cap B_{\beta^\frac{1}{-n+\beta-\rho}}(z)\right|\frac{d\beta}{\beta}.
\end{align*}
Now returning to the original estimate and applying Fubini's theorem we conclude
\begin{align*}
\|f\|_{L^\frac{q}{p-1}_w(\Omega)}^q \leq& 2q \int_0^\infty \alpha^q \int_0^{\frac{1}{2} r^{-n+\theta-\rho}}\beta \left|E_\alpha \cap B_{\beta^\frac{1}{-n+\theta-\rho}}(z)\right|\frac{\d\beta}{\beta}\frac{\d\alpha}{\alpha}\\
\leq& 2q \|f\|_{L^{q,\theta}(\Omega)}^q \int_0^{\frac 1 2 r^{-n+\theta-\rho}} \beta^{1+\frac{n-\theta}{-n+\theta-\rho}}\leq c \|f\|_{L^{q,\theta}(\Omega)}^q r^{-\rho}.
\end{align*}
This gives the desired result.

\appendix
\section{Proof of Lemma \ref{lem:VFuncEquiv}}
We restate and prove Lemma \ref{lem:VFuncEquiv} here. 

\begin{lemma}
 For $\gamma\geq 0$ and $\mu\geq 0$ we have
  \[
  \frac{1}{6^{\gamma}(2\gamma+1)}(\mu^2+|\eta|^2+|\eta-\xi|^2)^\gamma\leq \int_0^1 (\mu^2+|t\xi+(1-t)\eta|^2)^\gamma\d t\leq 2^\gamma(\mu^2+|\eta|^2+|\eta-\xi|^2)^\gamma
 \]
 If $\gamma\in (-1/2,0]$ we have
 \begin{align*}
   2^\gamma(\mu^2+|\eta|^2+|\eta-\xi|^2)^\gamma\leq \int_0^1 (\mu^2+|t\xi+(1-t)\eta|^2)^\gamma\d t\leq \frac{1}{4^\gamma(\gamma+1)}(\mu^2+|\eta|^2+|\xi-\eta|^2)^\gamma
 \end{align*}
\end{lemma}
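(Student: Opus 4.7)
I would split the four inequalities into two easy ones, handled by a single pointwise estimate, and two harder ones, requiring a reparametrization. For the upper bound when $\gamma \geq 0$ and the lower bound when $\gamma \in (-1/2,0]$, the common starting point is
\begin{equation*}
\mu^2 + |t\xi + (1-t)\eta|^2 \leq 2\bigl(\mu^2 + |\eta|^2 + |\xi-\eta|^2\bigr), \qquad t \in [0,1],
\end{equation*}
obtained by expanding $|\eta + t(\xi-\eta)|^2$ and absorbing the cross term through $2t\,\eta\cdot(\xi-\eta)\leq |\eta|^2 + t^2|\xi-\eta|^2$ together with $t^2 \leq 1$. Raising to the $\gamma$-th power (which preserves the direction for $\gamma \geq 0$ and reverses it for $\gamma \leq 0$) and integrating over $[0,1]$ yields the constant $2^\gamma$ in both cases.

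For the remaining lower bound ($\gamma \geq 0$) and upper bound ($\gamma \in (-1/2,0]$), both claims are trivial when $\xi = \eta$, so assume $\xi \neq \eta$ and complete the square. With $L = |\xi-\eta|$, $t^* = -\eta\cdot(\xi-\eta)/L^2$, and $m = \mu^2 + |\eta|^2 - (\eta\cdot(\xi-\eta))^2/L^2 \geq 0$ (the last inequality being Cauchy--Schwarz), the integrand equals $(m + L^2(t-t^*)^2)^\gamma$ while the target quantity equals $A^\gamma = (m + L^2(1+t^{*2}))^\gamma$. The problem thus reduces to a one-variable comparison depending only on $m \geq 0$, $L>0$, and $t^* \in \mathbb{R}$.

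For the lower bound with $\gamma \geq 0$, I would combine the half-interval estimate $\int_0^1 |t-t^*|^{2\gamma}\d t \geq (2\gamma+1)^{-1}\max(|t^*|,|1-t^*|)^{2\gamma+1}$, obtained by keeping only the half of $[0,1]$ lying farther from $t^*$, with the sharp elementary inequality $\max(|t^*|,|1-t^*|)^2 \geq (1+t^{*2})/5$ (which holds for all $t^* \in \mathbb{R}$, with equality at $t^* = 1/2$, and is verified by splitting $t^*$ into the regions $[0,1/2]$, $[1/2,1]$, and $\mathbb{R}\setminus[0,1]$). Handling the contribution from $m$ via the trivial pointwise bound $f^\gamma \geq m^\gamma$ and combining the two controls through a standard power-of-sums estimate then yields the claim after absorbing numerical factors into $6^\gamma$. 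The main obstacle is precisely that for $t^* \in (0,1)$ the quadratic $(t-t^*)^2$ vanishes inside the integration range, so no pointwise lower bound is available; the $(2\gamma+1)^{-1}$ factor exactly encodes how integration dampens this zero.

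For the upper bound with $\gamma \in (-1/2,0]$ the situation is simpler: the monotonicity of $x \mapsto x^\gamma$ in the reverse direction gives $f(t)^\gamma \leq L^{2\gamma}|t-t^*|^{2\gamma}$, integrable since $2\gamma+1 > 0$, after which an explicit evaluation of $\int_0^1 |t-t^*|^{2\gamma}\d t$ combined with the concavity of $x \mapsto x^{2\gamma+1}$ on $[0,\infty)$ (valid because $2\gamma+1 \in (0,1]$) yields a bound independent of $t^*$ that compares with $A^\gamma$ in the desired way.
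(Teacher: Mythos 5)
Your two ``easy'' halves (the $2^\gamma$ upper bound for $\gamma\geq 0$ and the $2^\gamma$ lower bound for $\gamma\in(-1/2,0]$) are fine and coincide with the paper's argument, and the completion-of-the-square reduction is a legitimate alternative to the paper's normalisation. The hard halves, however, have genuine gaps. First, the ``half-interval estimate'' $\int_0^1|t-t^*|^{2\gamma}\,dt\geq(2\gamma+1)^{-1}\max(|t^*|,|1-t^*|)^{2\gamma+1}$ is false once $t^*$ leaves $[0,1]$: for $t^*=-1$, $\gamma=1$ the left side is $\int_0^1(t+1)^2\,dt=7/3$ while the right side is $8/3$; and in your parametrisation $t^*=-\eta\cdot(\xi-\eta)/L^2$ is in no way confined to $[0,1]$. (For $t^*\in[0,1]$ the estimate does hold, but by integrating over the whole far side $[t^*,1]$ or $[0,t^*]$, not over ``the farther half'': $\int_{1/2}^1(t-t^*)^{2\gamma}\,dt$ only gives $\bigl((1-t^*)^{2\gamma+1}-(1/2-t^*)^{2\gamma+1}\bigr)/(2\gamma+1)$.) Second, and more damaging, your combination step cannot produce the stated constant $6^{-\gamma}(2\gamma+1)^{-1}$: the inequalities $\max(|t^*|,|1-t^*|)\geq 1/2$, $\max^2\geq(1+t^{*2})/5$ and the power-of-sums merge of the $m$-part with the $L^2(1+t^{*2})$-part each cost multiplicative factors that do \emph{not} tend to $1$ as $\gamma\to 0$; one ends up with something like $c_0\,C^{-\gamma}(2\gamma+1)^{-1}$ with $c_0\leq 1/2$ and $C\geq 10$. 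Such losses cannot be ``absorbed into $6^\gamma$'': for small $\gamma>0$ the required constant is close to $1$ while yours stays below about $1/2$ (take e.g. $\mu=0$, $\eta=e_1$, $\xi=-e_1$, so $m=0$, $t^*=1/2$). The explicit constants tending to $1$ are precisely the point of this lemma (they feed the thresholds $6^{p-2}(p-1)$, $2^{2-p}/(p-1)$ in Theorem \ref{thm:KoshelevBasic}), so this is not cosmetic. The paper instead reduces, after assuming $|\eta|\geq|\xi|$, $|\eta|=1$, to the one-dimensional bound $|\eta+t(\xi-\eta)|\geq|1-ts|$ with $s=|\xi-\eta|\in[0,2]$ and evaluates the resulting integrals explicitly in the cases $s\leq1$ and $s\geq1$, tracking monotonicity in $s$; that is how the sharp-looking constants are obtained.

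The negative-$\gamma$ upper bound has a similar structural flaw: the step $f(t)^\gamma\leq L^{2\gamma}|t-t^*|^{2\gamma}$ discards $m=\mu^2+|\eta|^2-(\eta\cdot(\xi-\eta))^2/L^2$ entirely, so your intermediate bound is independent of $\mu$, whereas the target $\frac{1}{4^\gamma(\gamma+1)}(\mu^2+|\eta|^2+|\xi-\eta|^2)^\gamma$ tends to $0$ as $\mu\to\infty$ with $\xi\neq\eta$ fixed (since $\gamma<0$). Hence no bound that forgets $m$ can ``compare with $A^\gamma$ in the desired way''; you must keep $m$, e.g.\ by splitting into the regime $m\geq\tfrac12 A$ (where the pointwise bound $f^\gamma\leq m^\gamma$ already suffices) and its complement, or by integrating $(\mu+|1-ts|)^{2\gamma}$ explicitly as the paper does. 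As written, neither the lower bound for $\gamma\geq0$ with constant $6^{-\gamma}(2\gamma+1)^{-1}$ nor the upper bound for $\gamma\in(-1/2,0]$ is established.
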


\begin{proof}
We first consider $\gamma\geq 0$. Then by Young's inequality,
\begin{align*}
\int_0^1 (\mu^2+|t\xi+(1-t)\eta|^2)^\gamma\d t \leq& 2^{\gamma} \int_0^1 (\mu^2+|\eta|^2+t^2|\xi-\eta|^2)^\gamma\d t\\
\leq& 2^\gamma(\mu^2+|\eta|^2+|\xi-\eta|^2)^\gamma
\end{align*}

For the lower bound, note that by symmetry we may assume $|\eta|\geq |\xi|$. Dividing both sides by $|\eta|^\gamma$ we see that moreover we may assume $|\eta|=1$. Rotating coordinate axis if necessary we may even assume that $\eta =e_1$, the unit vector in the first coordinate direction. Note that
\begin{align*}
|\eta + t(\xi-\eta)|\geq |1- t|\xi-\eta||.
\end{align*}
Write $s = |\xi-\eta|$. Note that $s\in[0,2]$. 

We first assume $s\leq 1$. Then as $\gamma\geq 0$,
\begin{align*}
&\int_0^1 (\mu^2+|t\xi+(1-t)\eta|^\gamma\d t\geq \int_0^1 (\mu^2+(1-ts)^2)^\gamma \d t
\geq 2^{-\gamma} \int_0^1 (\mu+1-ts)^{2\gamma}\d t\\
=& \frac{1}{2^\gamma(2\gamma+1)s} \left((\mu+1)^{2\gamma+1}-(\mu+1-s)^{2\gamma+1}\right)\\
=& \frac{1}{2^\gamma(2\gamma+1)s}(\mu+1)^{2\gamma}\left((\mu+1)\left(1-\left(\frac{\mu+1-s}{\mu+1}\right)^{2\gamma}\right)+s\left(\frac{\mu+1-s}{\mu+1}\right)^{2\gamma}\right)\\
\geq& \frac{1}{2^\gamma(2\gamma+1)}(\mu+1)^{2\gamma}\left((\mu+1)\left(1-\left(\frac{\mu+1-s}{\mu+1}\right)^{2\gamma}\right)+\left(\frac{\mu+1-s}{\mu+1}\right)^{2\gamma}\right)\\
\geq& \frac{1}{2^\gamma(2\gamma+1)}(\mu+1)^{2\gamma}\\
\geq& \frac{1}{4^\gamma(2\gamma+1)} (\mu^2+1+s^2)^\gamma.
\end{align*}

Next assume $s\geq 1$. Then
\begin{align*}
&\int_0^1 (\mu^2+|t\xi+(1-t)\eta|^\gamma\d t\geq \int_0^1 (\mu^2+(1-ts)^2)^\gamma \d t\\
\geq& 2^{-\gamma} \int_0^{1/s}(\mu+1-ts)^{2\gamma}\d t+ 2^{-\gamma} \int_{1/s}^1 (\mu+ts-1)^{2\gamma}\\
=& \frac{1}{2^\gamma(2\gamma+1)s} \left((\mu+1)^{2\gamma+1}-\mu^{2\gamma+1}+(\mu+s-1)^{2\gamma+1}-\mu^{2\gamma+1}\right)\\
\geq& \frac{1}{2^\gamma(2\gamma+1)s}(\mu+1+s)^{2\gamma}\left(\left(\frac{\mu+1}{\mu+1+s}\right)^{2\gamma}+(s-1)\left(\frac{\mu+s-1}{\mu+1+s}\right)^{2\gamma}\right)
\end{align*}
Define
\begin{align*}
f(s)=\frac 1 s \left(\left(\frac{\mu+1}{\mu+1+s}\right)^{2\gamma}+(s-1)\left(\frac{\mu+s-1}{\mu+1+s}\right)^{2\gamma}\right)
\end{align*}
Note that
\begin{align*}
f'(s)=& \frac 1 {s^2}\left((1-\frac 2 {1+\mu+s})^{2\gamma} + 4\gamma(s-1)s \frac{(\mu+s-1)^{2\gamma-1}}{(\mu+s+1)^{2\gamma+1}}-\frac{(\mu+1)^{2\gamma}(1+\mu+s+2\gamma s)}{(\mu+s+1)^{2\gamma+1}}\right)
\\\leq 0
\end{align*}
Hence we conclude
\begin{align*}
\int_0^1 (\mu^2+|t\xi+(1-t)\eta|^\gamma\d t\geq& \frac{1}{2^\gamma(2\gamma+1)}(\mu+s)^{2\gamma}\left(\frac{\mu+1}{\mu+3}\right)^{2\gamma}\\
\geq& \frac{1}{6^\gamma(2\gamma+1)}(\mu+s)^{2\gamma}
\end{align*}
This completes the case $\gamma\geq 0$. 

We now turn to $\gamma\in (-\frac 1 2,0)$. We find
\begin{align*}
\int_0^1 (\mu^2+|t\eta+(1-t)\xi|^2)^\gamma \d t \geq& \int_0^1 (\mu^2+2(|\eta|^2+|\xi-\eta|^2)^2)^\gamma\d t\\
\geq& 2^\gamma (\mu^2+|\eta|^2+|\eta-\xi|^2)^\gamma.
\end{align*}
We turn to the upper bound. Again set $s = |\xi-\eta|$. First assume $s\leq 1$.Then
\begin{align*}
\int_0^1(\mu^2+|t\xi+(1-t)\eta|)^\gamma\d t \leq& \int_0^1 (\mu^2+(1-ts)^2)^\gamma \d t\\
\leq& 2^{-\gamma} \int_0^1 (\mu+1-ts)^{2\gamma}\\
=& \frac{1}{2^\gamma(2\gamma+1)s}\left((\mu+1)^{2\gamma+1}-(\mu+1-s)^{2\gamma+1}\right).
\end{align*}
Set
\begin{align*}
f(s) = \frac 1 s\left((\mu+1)^{2\gamma+1}-(\mu+1-s)^{2\gamma+1}\right)
\end{align*}
and note
\begin{align*}
f'(s) = (1+\mu)((1+\mu-s)^{2\gamma}-(1+\mu)^{2\gamma})+2\gamma s(1+\mu-s)^{2\gamma}\geq 0
\end{align*}
to conclude
\begin{align*}
\int_0^1 (\mu^2+|t\eta+(1-t)\xi|^2)^\gamma \d t \leq&\frac{1}{2^\gamma(2\gamma+1)}((\mu+1)^{2\gamma+1}-\mu^{2\gamma+1})\\
\leq& \frac{(\mu+1)^{2\gamma}}{2^\gamma(2\gamma+1)}\\
\leq& \frac{(\mu+1+s^2)^{2\gamma}}{4^\gamma(2\gamma+1)}.
\end{align*}

If $s\geq 1$ we estimate
\begin{align*}
\int_0^1(\mu^2+|t\xi+(1-t)\eta|)^\gamma\d t \leq& \int_0^1 (\mu^2+(1-ts)^2)^\gamma \d t\\
\leq& 2^{-\gamma} \int_0^{1/s} (\mu+1-ts)^{2\gamma}+2^{-\gamma}\int_{1/s}^1 (\mu+ts-1)^{2\gamma}\\
=& \frac{1}{2^\gamma(2\gamma+1)s}\left(\mu+1)^{2\gamma+1}-\mu^{2\gamma+1}+(\mu+s-1)^{2\gamma+1}-\mu^{2\gamma+1}\right)\\
\leq& \frac{1}{2^\gamma(2\gamma+1)s}\left((\mu+1)^{2\gamma}+(s-1)(\mu+s-1)^{2\gamma}\right)
\end{align*}
Define
\begin{align*}
g(s)=\frac 1 s \left((\frac{\mu+1}{\mu+s})^{2\gamma}+(s-1)\left(\frac{\mu+s-1}{\mu+s}\right)^{2\gamma}\right)
\end{align*}
and note that
\begin{align*}
&0\leq g'(s) = \frac 1 {s^2 (\mu+ s-1)}(m + s)^{-1 - 
  2 \gamma} (-(\mu+1)^{2 \gamma} (\mu + s-1) (\mu + s + 2 \gamma s) \\
  &\qquad+( m + s-1)^{2\gamma} (\mu^2 + (1 + 2 \gamma) (s-1) s + \mu ( 2 s-1)))\\
  \Leftrightarrow& (\frac{\mu+1}{\mu+s-1})^{2\gamma}>\frac{\mu^2+(1+2\gamma)s(s-1)+\mu(2s-1)}{(\mu+s-1)(\mu+s+2\gamma s)}
\end{align*}
By Bernoulli's inequality it suffices to check that
\begin{align*}
&\mu+s-1+2\gamma(s-2)\geq \frac{\mu^2+(1+2\gamma)s(s-1)+\mu(2s-1)}{\mu+s+2\gamma s} \\
\Leftrightarrow& 2\gamma \mu(s-2)+2\gamma s (s-2)+4\gamma^2 s(s-2) \geq 0.
\end{align*}
It is straightforward to see that the last inequality holds.
Thus $g(s)$ is increasing and we can conclude
\begin{align*}
\int_0^1(\mu^2+|t\xi+(1-t)\eta|)^\gamma\d t \leq& \frac{(\mu+s)^{2\gamma}}{2^\gamma(2\gamma+1)}\\
\leq& \frac{(\mu+s+1)^{2\gamma}}{4^\gamma(2\gamma+1)}. 
\end{align*}
\end{proof}

\textbf{Acknowledgments:} The author would like to thank Jan Kristensen for suggesting the topic of this paper to him. He would also like to thank Ioannis Papadopoulos and Pascal Heid for useful discussion and practical suggestions regarding the numerical experiments.

{\small
\bibliographystyle{acm}
\bibliography{../bibtex/Koshelev}
}
\end{document}